\numberwithin{equation}{section}
\numberwithin{subsection}{section}
\newenvironment{enumeratei} {\begin{enumerate}[\upshape (i)]} {\end{enumerate}}
\newtheorem{lem}{Lemma}[section]
\newtheorem{theorem}{Theorem}
\newtheorem{prop}[lem]{Proposition}
\newtheorem{cor}[lem]{Corollary}
\theoremstyle{definition}
\theoremstyle{remark}
\newtheorem{remark}[lem]{Remark}
\newcommand\cA{\mathcal{A}} \newcommand\cB{\mathcal{B}}   \newcommand\cL{\mathcal{L}}\newcommand\cM{\mathcal{M}}\newcommand\cO{\mathcal{O}}\newcommand\cU{\mathcal{U}}\newcommand\cX{\mathcal{X}}
\renewcommand\AA{\mathbb{A}}\newcommand\FF{\mathbb{F}}\newcommand\GG{\mathbb{G}}\newcommand\PP{\mathbb{P}}\newcommand\QQ{\mathbb{Q}}\newcommand\ZZ{\mathbb{Z}}
\newcommand\fm{\mathfrak{m}}
\newcommand\id{\mathrm{id}}
\newcommand\spec{\operatorname{Spec}}
\newcommand\arr{\ifinner\to\else\longrightarrow\fi}
\newcommand\mapsonto{\twoheadrightarrow}
\newcommand\hookarr{\hookrightarrow}
\renewcommand{\ss}{\operatorname{ss}}
\newcommand{\s}{\operatorname{s}}
\newcommand{\rrarrows}{\rightrightarrows}
\newcommand{\Mor}{\operatorname{Mor}}
\newcommand{\Pic}{\operatorname{Pic}}
\newcommand{\Proj}{\operatorname{Proj}}
\newcommand{\Sym}{\operatorname{Sym}}
\newcommand{\Aut}{\operatorname{Aut}}
\newcommand{\oh}{\cO}
\newcommand{\Spec}{\spec}
\newcommand{\tensor} {\otimes}
\newcommand{\iso}{\stackrel{\sim}{\arr}}
\newcommand{\Gr}{\operatorname{Gr}}
\newcommand{\GGr}{\GG \operatorname{r}}
\newcommand{\GL}{\operatorname{GL}}
\newcommand{\PGL}{\operatorname{PGL}}
\newcommand{\SL}{\operatorname{SL}}
\newcommand{\GIT}{\operatorname{GIT}}
\renewcommand{\bar}{\overline}
\newcommand{\dual}{\vee}
\newcommand{\bsp}{\hspace{-.05in}}
\def\T{{\text{\scriptsize T}}}
\newcommand{\epf}{\qed \vspace{+10pt}}
\begin{document}
\title{Computing invariants via slicing groupoids: 
\\{\tiny Gel'fand MacPherson, Gale and positive characteristic stable maps}}
\author[Alper]{Jarod Alper}

\address[Alper]{Department of Mathematics\\
Columbia University\\
2990 Broadway\\
New York, NY 10027\\
U.S.A.}
\email{jarod@math.columbia.edu}

\begin{abstract}
We offer a groupoid-theoretic approach to computing invariants.  We illustrate this approach by describing the Gel'fand-MacPherson correspondence and the Gale transform.  We also provide Zariski-local descriptions of the moduli space of ordered points in $\PP^1$.  We give an explicit description of the moduli space $M_0(\PP^1,2)$ over $\Spec \ZZ$.  In characteristic 2, the singularity at the totally ramified cover is isomorphic to the affine cone over the Veronese embedding $\PP^1 \arr \PP^4$.
\end{abstract}

\maketitle

\section{Introduction}
The central question in classical invariant theory is to describe the graded ring of invariants $\oplus_{n \ge 0} \Gamma(X, \oh(n))^G$ where $G$ is an algebraic group acting linearly on a projective space $\PP(V)$ and $X \subseteq \PP(V)$ is a $G$-invariant subvariety.  In this paper, we show that one can sometimes ``slice'' the groupoid $G \times X \rrarrows X$ by a subvariety $W \subseteq X$ which is suitably transverse to the generic orbit to produce a groupoid $R|_W \rrarrows W$ (not necessarily arising from a  group action) where it is easier to compute the invariants.  Specifically, suppose $X$ is normal and $g: W \to X$ is a finite type morphism such that the composition $G \times W \to G \times X \xrightarrow{\sigma} X$ is flat whose image $G \cdot W \subseteq X$ has a complement of codimension at least 2, then $R|_W := G \times X \times_{X \times X} W \times W \rrarrows W$ is a flat groupoid  and there is a canonical isomorphism $\Gamma(X, \oh(n))^G \iso \Gamma(W, g^* \oh(n))^{R|_W}$; see Section \ref{section-slicing}.  In stack-theoretic language, if $X$ is normal and $g: W \to X$ a finite type morphism such that the composition $W \xrightarrow{g} X \to [X/G]$ to the quotient stack is a flat morphism whose image has a complement of codimension at least 2, then  $R|_W = W \times_{[X/G]} W \rrarrows W$ is flat groupoid with the same invariants as $G \times X \rrarrows X$.  

In Section \ref{section-gelfand}, we illustrate the above idea by offering generalizations of the classical Gel'fand-MacPherson correspondence and Gale transform.  Propositions \ref{proposition-gm} and \ref{proposition-flag} recover and generalize results of \cite{gelfand-macpherson}, \cite{hu} and \cite{borcea}.  
In Section \ref{section-ordered-points}, we employ the technique of slicing to give Zariski-local descriptions of the moduli space of $n$ order points in $\PP^1$ as in \cite{hmsv}.  In fact, unlike the description in \cite{hmsv}, we offer a description of Zariski-neighborhoods around strictly semistable points which cover the quotient space.

In Section \ref{section-kontsevich}, we offer a global description of the Kontsevich moduli space of stable maps $M_0(\PP^1,2)$ as well as its GIT compactification.  Here $M_0(\PP^1,2)$ is the coarse moduli scheme parameterizing non-constant, degree 2 morphisms $\PP^1 \arr \PP^1$ modulo automorphisms of the source.  If the characteristic is not 2, then $M_0(\PP^1,2)$ has a simple description:  since giving a degree 2 morphism $\PP^1 \arr \PP^1$ is equivalent to giving two unordered points in $\PP^1$, $M_0(\PP^1, 2)$ is simply the affine scheme $(\PP^1 \times \PP^1\setminus \Delta) / \ZZ_2$.
In characteristic 2, it is more subtle to give an explicit description of $M_0(\PP^1,2)$ due to the totally ramified morphism $f: \PP^1 \arr \PP^1, [x,y] \mapsto [x^2, y^2]$.  In fact, the motivation of this paper was to understand the singularity in $M_0(\PP^1,2)$ at the point corresponding to $f$.  
The automorphism group scheme $\Aut(f)$ is the subgroup scheme $V(a^2-d^2,b^2,c^2) \subseteq \PGL_2$
, where $a,b,c$ and $d$ are matrix entries.  The group scheme $\Aut(f)$ is a finite, connected, non-reduced and non-linearly reductive group scheme of dimension 3 at the identity.  In particular, Luna's \'etale slice theorem does not offer an \'etale local description of $M_0(\PP^1,2)$ at this point.

  We will describe $M_0(\PP^1,2)$ globally by realizing it as the following geometric quotient:  since any degree 2 morphism can be written as $\PP^1 \arr \PP^1, [x,y] \mapsto [A_1x^2 + B_1xy+C_1y^2, A_2 x^2 + B_2 xy + C_2 y^2]$, one sees that $M_0(\PP^1,2) = U/ \PGL_2$, where if $V$ is the free $\ZZ$-module generated by $A_i, B_i, C_i$, then $U \subseteq \PP(V)$ is the invariant open locus consisting of basepoint free sections and $\PGL_2$ acts linearly on $V$ by acting on the homogenous coordinates $x$ and $y$; see Section \ref{section-kontsevich} for details.  The line bundle $\oh(1)$ on $\Proj \ZZ[A_i,B_i,C_i]$ has a unique $\PGL_2$-linearization.  The GIT quotient
$$\bar{M}_0^{\GIT}(\PP^1,2)=\Proj S \quad \text{where} \quad S=\ZZ[A_i,B_i,C_i]^{\PGL_2}$$
gives a compactification of $M_0(\PP^1, 2)$.

There are some obvious invariants over $\ZZ$:
\begin{equation}
\begin{aligned} \label{invariants}
	\Delta_1 & = B_1^2 - 4A_1C_1		& \textrm{(discriminant of $s_1$)} \\
	\Delta_2 & = B_2^2 - 4A_2C_2		& \textrm{(discriminant of $s_2$)} \\
	\Delta_{12} & = (B_1+B_2)^2 - 4(A_1+A_2)(C_1+C_2)		& \textrm{(discriminant of $s_1 + s_2$)}\\
	\Lambda &= (A_1C_2+C_1A_2)^2 - (A_1C_2+C_1A_2) (B_1B_2) + & \textrm{(vanishing of basepoint locus)}\\
				& \quad \, \, \, \, A_1C_1(B_2^2-2A_2C_2) + A_2C_2(B_1^2-2A_1C_1)	\\	
\Gamma & = B_1B_2 - 2A_1C_2-2C_1A_2
\end{aligned} \end{equation}	
with the relations
\begin{equation} \begin{aligned} \label{relations}
\Delta_{12} &= \Delta_1 + \Delta_2 + 2 \Gamma \\
 4 \Lambda &= \Gamma^2 - \Delta_1 \Delta_2
\end{aligned} \end{equation}


\begin{theorem}  \label{main_theorem}
The projective ring of invariants has the following explicit description:
\begin{enumeratei}
\item Over $\Spec \ZZ$, 
$$S = \ZZ[\Delta_1, \Delta_2, \Gamma, \Lambda]/(4\Lambda-\Gamma^2+\Delta_1 \Delta_2)$$
\item Over $\Spec \ZZ[\frac{1}{2}]$,  
$$S = \ZZ[\frac{1}{2}][\Delta_1, \Delta_2, \Gamma]$$
\item Over $\Spec \FF_2$, 
$$S \cong \FF_2[B_1, B_2, \Lambda]$$
\end{enumeratei}
In particular, over $\FF_2$, $\bar{M}^{\GIT}_0 (\PP^1, 2) \cong \PP(1,1,4)$ and $M_0 (\PP^1, 2) \cong \Spec k[X^4,X^3Y,X^2Y^2,XY^3,Y^4]$ is the cone over the Veronese embedding $\PP^1 \stackrel {\oh(4)}{\rightarrow} \PP^4$, where the origin corresponds to the totally ramified morphism $\PP^1 \arr \PP^1, [x,y] \mapsto [x^2,y^2]$.  
\end{theorem}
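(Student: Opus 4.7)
First, I would verify by direct expansion of the $\PGL_2$-action on coefficients that $\Delta_1, \Delta_2, \Gamma, \Lambda$ are invariants over $\ZZ$ and that the relation $4\Lambda = \Gamma^2 - \Delta_1 \Delta_2$ holds, yielding a canonical ring homomorphism $\varphi : R \to S$, where $R = \ZZ[\Delta_1, \Delta_2, \Gamma, \Lambda]/(4\Lambda - \Gamma^2 + \Delta_1 \Delta_2)$ and $S = \ZZ[A_i, B_i, C_i]^{\PGL_2}$. The plan is to prove parts (ii) and (iii) by essentially independent arguments, then deduce (i) from (ii) together with an injectivity statement for the mod-$2$ reduction of $\varphi$.

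For part (ii), since $\ZZ \to \ZZ[\frac{1}{2}]$ is flat, taking $\PGL_2$-invariants commutes with this base change; the claim then reduces to the classical First Fundamental Theorem of $\SL_2$-invariant theory for pairs of binary quadratic forms (proved via Cayley's $\Omega$-process or transvectant calculus), which identifies the invariant ring over $\ZZ[\frac{1}{2}]$ as freely generated by $\Delta_1, \Delta_2$ and the joint invariant $\Gamma$, with algebraic independence forced by the dimension count $\dim V - \dim \PGL_2 = 3$.

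The main obstacle is part (iii). The key new fact in characteristic $2$ is that $B_1, B_2$ become $\PGL_2$-invariants, since $B_i' = B_i \det(g) = B_i$ on $\SL_2$ in characteristic $2$; this gives the inclusion $\FF_2[B_1, B_2, \Lambda] \hookrightarrow S_{\FF_2} := \FF_2[A_i, B_i, C_i]^{\PGL_2}$. For the reverse inclusion, the approach most in the spirit of this paper is to apply the slicing theorem of Section~\ref{section-slicing} to a slice $W \subseteq V$ over $\Spec \FF_2$ satisfying the codimension-two hypothesis (e.g.\ a small enlargement of $\{A_1 = C_1 = 0\}$ chosen to cover the discriminant locus of $s_1$) and identify the resulting flat groupoid invariants with $\FF_2[B_1, B_2, \Lambda]$ via restriction. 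An alternative, more representation-theoretic approach is to filter $\Sym V^\vee = \FF_2[A_i, B_i, C_i]$ by $B$-degree (weighting $B_i$ at $1$ and $A_i, C_i$ at $0$): this filtration is $\SL_2$-stable in characteristic $2$ since $A_i \mapsto A_i d^2 + B_i c d + C_i c^2$, and on the associated graded the pairs $(A_i, C_i)$ transform via the Frobenius twist of the standard representation while $B_i$ are trivial. The First Fundamental Theorem applied to two copies of the Frobenius twist identifies the graded invariants as $\FF_2[B_1, B_2, \xi]$ with $\xi = A_1 C_2 + A_2 C_1$. The delicate step will be the lifting analysis: a direct calculation showing that $g \cdot \xi - \xi = bd(A_1 B_2 + B_1 A_2) + ac(B_1 C_2 + B_2 C_1)$ admits no degree-$2$ polynomial correction forces that $\xi$ does \emph{not} lift to an honest $\SL_2$-invariant on $V$, whereas $\xi^2$ does lift, precisely to $\Lambda$; matching Hilbert series (both equal $\frac{1}{(1-t)^2(1-t^4)}$) then yields $S_{\FF_2} = \FF_2[B_1, B_2, \Lambda]$.

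Finally, for (i), part (ii) identifies $R \otimes \ZZ[\frac{1}{2}] = S \otimes \ZZ[\frac{1}{2}]$, so $S/R$ is $2$-power torsion. To force $S = R$, it then suffices to show the reduction $R \otimes \FF_2 \to \FF_2[A_i, B_i, C_i]$ is injective: this forces any $P \in R$ divisible by $2$ in $\ZZ[A_i, B_i, C_i]$ to already be divisible by $2$ in $R$, which iterated kills all torsion in $S/R$. For the injectivity, note that $R \otimes \FF_2 \cong \FF_2[X, Y, Z, W]/(Z^2 + XY)$ maps to $\FF_2[B_1^2, B_2^2, B_1 B_2, \Lambda_{\FF_2}] \subseteq \FF_2[A_i, B_i, C_i]$, and since $\Lambda_{\FF_2}$ involves the $A_i, C_i$-variables nontrivially and is algebraically independent from $\FF_2[B_1, B_2]$, any element written uniquely as $P_0(X, Y, W) + P_1(X, Y, W) Z$ (using $Z^2 = XY$) maps to two polynomials of distinct parity in total $B_i$-degree, so vanishing of the sum forces $P_0 = P_1 = 0$. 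The geometric assertions $\bar M_0^{\GIT}(\PP^1, 2) \cong \PP(1, 1, 4)$ and $M_0(\PP^1, 2) \cong \Spec \FF_2[X^4, X^3 Y, X^2 Y^2, X Y^3, Y^4]$ over $\FF_2$ follow by taking $\Proj$ of $\FF_2[B_1, B_2, \Lambda]$ and restricting to the basepoint-free open subscheme $\{\Lambda \ne 0\}$, the affine cone over the degree-$4$ Veronese embedding of $\PP^1$ into $\PP^4$.
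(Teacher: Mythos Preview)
Your overall strategy is quite different from the paper's. The paper proves (i) \emph{first}, by slicing the groupoid $\PGL_2 \times \PP V \rightrightarrows \PP V$ along $W = \{A_1 = C_2 = 0\} \setminus (\{B_1 = C_1 = 0\} \cup \{A_2 = B_2 = 0\})$, checking flatness via Proposition~\ref{flat_prop}, and then computing the equalizer component by component on the four irreducible pieces of $R|_W$; part (ii) follows by flat base change, and (iii) by repeating the identical slice computation over $\FF_2$. Your route --- (ii) via classical binary-form invariant theory, (iii) via a $B$-degree filtration, (i) deduced from (ii) plus mod-$2$ injectivity --- is more conceptual in places (the reduction of (i) to (ii) is genuinely nice), but part (iii) has a real gap.

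The filtration argument does not close. You correctly identify the associated-graded invariants as $\FF_2[B_1,B_2,\xi]$, but this only gives an inclusion $\operatorname{gr}(S_{\FF_2}) \hookrightarrow \FF_2[B_1,B_2,\xi]$, whose Hilbert series is $\tfrac{1}{(1-t)^2(1-t^2)}$, not $\tfrac{1}{(1-t)^2(1-t^4)}$, so there is no Hilbert-series match to invoke. Showing that $\xi$ alone fails to lift is not enough: the set of $f \in \FF_2[B_1,B_2,\xi^2]$ for which $f\xi \in \operatorname{gr}(S_{\FF_2})$ is an ideal, and you have only shown $1$ is not in it, not that it vanishes. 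Ruling out all such $f$ is essentially as hard as the original computation. Your slicing alternative is the right instinct, but the proposed slice $\{A_1 = C_1 = 0\}$ has orbit-complement of codimension $1$ (it misses every pair with $s_1$ a perfect square), and ``a small enlargement'' is not a proof; the paper's choice $\{A_1 = C_2 = 0\}$ is exactly what makes the complement codimension $3$.

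A smaller issue in your argument for (i): the parity claim fails as stated, since $\Gamma \mapsto B_1 B_2$ also has even total $B$-degree, so $P_0$ and $P_1\Gamma$ are not separated that way. The injectivity of $R \otimes \FF_2 \to \FF_2[A_i,B_i,C_i]$ is nonetheless correct: specialize $A_i = C_i = 0$ (so $\Lambda \mapsto 0$) and note that monomials of $P_0(B_1^2,B_2^2)$ have both exponents even while those of $P_1(B_1^2,B_2^2)\,B_1 B_2$ have both odd; this forces $\Lambda \mid P_0, P_1$, and one finishes by induction on $\Lambda$-degree.
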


While the above theorem is rather modest, it is our belief that the technique of the proof is of interest and may be applicable in other invariant calculations.


\subsection*{Acknowledgments}  Ravi Vakil offered many valuable suggestions for this article.  I also thank Kevin Tucker.

\section{Slicing groupoids} \label{section-slicing}

\subsection{Groupoids} Let $S$ be a scheme.  An \emph{$S$-groupoid} is a pair of morphisms $s,t: R \rrarrows X$ of schemes over $S$ together with an identity section $e: X \arr R$, an inverse $i: R \arr R$ and a composition $c: R \times_{t,X,s} R \arr R$ satisfying the usual identities.  We say that an $S$-groupoid $s,t: R \rrarrows X$ is an \emph{fppf $S$-groupoid} if $s,t$ are flat and locally of finite presentation and $R \stackrel{(s,t)}{\arr} X \times_S X$ is quasi-compact and separated.  A \emph{line bundle on $X$ with $R$-action} is a line bundle $L$ on $X$ with an isomorphism $\alpha: s^* L \iso t^* L$ satisfying the cocycle condition.  We define the \emph{$\Gamma(S,\oh_S)$-module of $R$-invariant sections} as the equalizer
$$\Gamma(X,L)^R \arr \Gamma(X,L) \stackrel{\alpha \circ s^*, t^* }{\rrarrows} \Gamma(R, t^*L)$$

\subsection{Group actions}  If $G$ is a group scheme flat, finitely presented and separated over $S$ acting on a scheme $p: X \arr X$ where $\sigma: G \times_S X \arr X$ is the multiplication morphism, then $\sigma,p_2:G \times_S X \rrarrows X$ is an fppf $S$-groupoid where the identity, inverse and composition are defined in the obvious way.  A line bundle on $X$ with $G$-action (or equivalently a $G$-linearization) is the same a line bundle on $X$ with $R=G \times_S X$-action (i.e. a line bundle $L$ on $X$ with an isomorphism $\alpha: \sigma^* L \arr p_2^* L$ satisfying the cocycle condition).  

A $G$-linearization of the trivial sheaf $\oh_X$ is an isomorphism $\alpha: \oh_{G \times_S X} \iso \oh_{G \times_S X}$ with the cocycle condition being $(\sigma,p_2)^*\alpha = (\id \times \sigma)^* \alpha \circ p_23^* \alpha$.  This corresponds to a morphism $\Psi: G \times_S X \arr \GG_m$ satisfying $\psi(g \cdot h, x) = \Psi(g, h \cdot x) \cdot \Psi(h,x)$ for all $S$-schemes $T$ and $T$-valued points $g,h \in G(T)$ and $x \in X(T)$.  If $\Psi$ factors as $G \times_S X \stackrel{p_1}{\arr} G \stackrel{\chi}{\arr}\GG_m$, then $\chi$ is a character (i.e. a homomorphism $G \arr \GG_m$ of group schemes).  Conversely, any character $\chi$ gives a $G$-linearization of the trivial sheaf.  In particular, if $p_* \oh_X = \oh_S$, then $G$-linearizations correspond precisely to characters $G \arr \GG_m$.

\subsection{Slicing} \label{slicing_subsect} If $g: W \arr X$ is a morphism of schemes, define $R|_W$ as the fiber product
$$\xymatrix{
R|_W \ar[r]^{(s',t')} \ar[d]	& W \times W \ar[d] \\
R \ar[r]^{(s,t)}			& X \times X
}$$
Then $s',t': R|_W \rrarrows W$ is an $S$-groupoid where the identity, inverse and composition are defined in the obvious way.   There is a cartesian diagram
$$\xymatrix{
R|_{W} \ar[r] \ar[d]			& R \times_{s,X,g} W \ar[r] \ar[d]					& W \ar[d] \\
R \times_{t, X, g} W\ar[r] \ar[d]	& R \ar[r]^{s} \ar[d]^t		& X \ar[d] \\
W \ar[r]					& X \ar[r]								& [X/R]
}$$

If the composition $R \times_{t,X,g} W \arr R \stackrel{s}{\arr} X$ is flat, then $R|_W \rrarrows W$ is an fppf $S$-groupoid.  If $L$ is a line bundle on $X$ with $R$-action, then $g^*L$ is naturally a line bundle on $W$ with $R|_W$-action.  

If the composition $R \times_{t,X,g} W \arr R \stackrel{s}{\arr} X$ is flat and surjective, then the groupoids $R \rrarrows X$ and $R|_W \rrarrows W$ are Morita equivalent (i.e. the quotient stacks $[X/R]$ and $[W/R|_W]$ are isomorphic)  
and the natural pullback
$g^*: \Gamma(X,L)^R \arr \Gamma(W, g^* L)^{R|_W}$
is an isomorphism.  If $X$ is normal, then by Hartogs' theorem we obtain the following useful proposition:
\begin{prop}
Suppose $R \rrarrows X$ is an fppf $S$-groupoid with $X$ normal.  Let $L$ be a line bundle on $X$ with $R$-action. If $g: W \to X$ is a morphism such that the composition $R \times_{t,X,g} W \arr R \stackrel{s}{\arr} X$ is flat and whose image has a complement of codimension at least 2, then the natural pullback
$$g^*: \Gamma(X,L)^R \arr \Gamma(W, g^* L)^{R|_W}$$
is an isomorphism. \epf
\end{prop}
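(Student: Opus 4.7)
The strategy is the one foreshadowed in the preceding discussion: I would restrict the groupoid to the image $U = \operatorname{im}(R \times_{t,X,g} W \arr X)$ to reach a setting where the Morita-equivalence assertion applies, and then extend the isomorphism of invariant sections back up to $X$ using Hartogs' theorem.

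First I would check the basic setup. Since $s,t$ are flat and locally of finite presentation, $U$ is open in $X$, and by hypothesis $X \setminus U$ has codimension at least $2$. Using composition and inversion in the groupoid one verifies that $U$ is $R$-saturated, so that $s^{-1}(U) = t^{-1}(U) = R|_U$. The composition $R|_U \times_{t,U,g} W \arr R|_U \arr U$ is then flat and surjective by construction, and the Morita-equivalence statement recalled just before the proposition produces
$$\Gamma(U, L|_U)^{R|_U} \iso \Gamma(W, g^*L)^{R|_W}.$$
It therefore remains to prove that the restriction map $\Gamma(X, L)^R \arr \Gamma(U, L|_U)^{R|_U}$ is an isomorphism.

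Writing both invariant modules as equalizers of $t^*$ and $\alpha \circ s^*$, a diagram chase reduces the claim to two facts: (i) the restriction $\Gamma(X, L) \arr \Gamma(U, L|_U)$ is bijective; and (ii) the restriction $\Gamma(R, t^*L) \arr \Gamma(R|_U, (t^*L)|_{R|_U})$ is injective. Claim (i) is exactly Hartogs' theorem applied to the line bundle $L$ on the normal scheme $X$, using that $X \setminus U$ has codimension at least $2$.

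The main obstacle is claim (ii), since $R$ itself need not be normal or even satisfy $(S_2)$. The key observation should be that flatness of $t$ is already enough: every associated point of $R$ maps to an associated point of $X$, and since $X$ is normal and hence reduced, its associated points are the generic points of its irreducible components, all of which lie in the dense open $U$. Hence every associated point of $R$ lies in $R|_U = t^{-1}(U)$, so any section of the line bundle $t^*L$ vanishing on $R|_U$ must vanish at every associated point and therefore globally. Combining these three reductions yields the claimed isomorphism.
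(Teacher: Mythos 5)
Your argument is correct and is exactly the route the paper intends: restrict to the saturated open image $U$, invoke the Morita-equivalence statement for the flat surjective case, and extend back to $X$ by Hartogs' theorem on the normal scheme $X$. The paper leaves the proof to the preceding discussion, so your verification of the injectivity of $\Gamma(R,t^*L)\arr\Gamma(R|_U,t^*L|_{R|_U})$ via associated points of the flat $t$ is a welcome detail it omits.
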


\begin{remark}  If $G \times X \rrarrows X$ is the fppf $S$-groupoid arising from a group action, then slicing by $g: W \arr X$ often produces groupoids $R|_W \rrarrows W$ that do not arise from some group action.
\end{remark}

\subsection{Flatness}  We provide here a method to check when slicing a groupoid $R \rrarrows X$ by a locally closed subscheme $W \hookarr X$ produces an fppf $S$-groupoid $R|_W \rrarrows W$.  Recall the following version of the local criterion for flatness
\begin{prop} 
Let $\phi: A \arr B$ be a flat, local homomorphism of local noetherian rings.  For  $f \in B$, the following conditions are equivalent::
\begin{enumeratei}
\item $f$ is a non-zero divisor and $B/(f)$ is flat over $A$.
\item $f \tensor 1$ is a non-zero divisor in $B \tensor_A A/\fm_A$.
\end{enumeratei}
\end{prop}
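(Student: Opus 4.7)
The plan is to prove both directions by standard $\Tor$-theoretic arguments, exploiting the $A$-flatness of $B$ together with the local criterion of flatness in the form valid for a finitely generated $B$-module under a local homomorphism $\phi: A \arr B$ of Noetherian local rings. The direction (i) $\Rightarrow$ (ii) is essentially immediate: if $B/(f)$ is $A$-flat, then $\Tor_1^A(B/(f), A/\fm_A) = 0$, so tensoring the short exact sequence
\[
0 \arr B \xrightarrow{\cdot f} B \arr B/(f) \arr 0
\]
over $A$ with $A/\fm_A$ preserves exactness, yielding that $f \tensor 1$ is a non-zero divisor in $B/\fm_A B = B \tensor_A A/\fm_A$.

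For (ii) $\Rightarrow$ (i), the idea is to factor multiplication by $f$ on $B$ as $B \mapsonto fB \hookarr B$ and to track what happens after tensoring with $A/\fm_A$. By hypothesis, the composition $B/\fm_A B \arr B/\fm_A B$ is injective, which simultaneously forces (a) the surjection $B/\fm_A B \mapsonto fB \tensor_A A/\fm_A$ to be an isomorphism, and (b) the natural map $fB \tensor_A A/\fm_A \hookarr B/\fm_A B$ to be injective. Applying $-\tensor_A A/\fm_A$ to $0 \arr fB \arr B \arr B/(f) \arr 0$, the long exact $\Tor^A(-, A/\fm_A)$ sequence combined with $\Tor_1^A(B, A/\fm_A) = 0$ and (b) forces $\Tor_1^A(B/(f), A/\fm_A) = 0$, so $B/(f)$ is $A$-flat by the local criterion. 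The two-out-of-three property for flatness in a short exact sequence then gives that $fB$ is $A$-flat, so tensoring $0 \arr \Ann_B(f) \arr B \xrightarrow{\cdot f} fB \arr 0$ with $A/\fm_A$ remains exact on the left; observation (a) then yields $\Ann_B(f)/\fm_A \Ann_B(f) = 0$. Since $\phi$ is local, $\fm_A B \subseteq \fm_B$, so Nakayama's lemma applied to the finitely generated $B$-module $\Ann_B(f)$ concludes $\Ann_B(f) = 0$.

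The main subtlety is not any single step but the bookkeeping of the factorization $\cdot f = (\text{inclusion}) \circ (\text{surjection})$ after tensoring: one must extract simultaneously from the injectivity of $\cdot(f \tensor 1)$ both (b), which delivers $A$-flatness of $B/(f)$ via $\Tor_1^A$-vanishing, and (a), which delivers the non-zero divisor property via Nakayama. Once this is exploited, the remaining ingredients---the local criterion of flatness and Nakayama's lemma---are routine.
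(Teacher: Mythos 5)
Your argument is correct and is the standard $\Tor$/Nakayama proof of the slicing (local) criterion for flatness: the factorization of $\cdot f$ through $fB$, the vanishing of $\Tor_1^A(B/(f),A/\fm_A)$ via the long exact sequence and $A$-flatness of $B$, and Nakayama applied to $\Ann_B(f)$ (using $\fm_A B\subseteq\fm_B$) are all deployed correctly, and the local criterion is invoked in the correct form for a finitely generated $B$-module. The paper itself states this proposition without proof, simply recalling it as a known version of the local criterion for flatness, so there is nothing to compare against; your write-up supplies exactly the argument one would expect from the standard references.
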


Let $s,t: R \rrarrows X$ is an fppf $S$-groupoid with $S$ noetherian.  Suppose $W=V(f) \hookarr X$ is defined by the vanishing locus of a section $f \in \Gamma(X,L)$ for a line bundle $L$ on $X$ with $R$-action (where $f$ is not necessarily $R$-invariant).  To show that the composition $t^{-1}(W) \arr R \stackrel{s}{\arr} X$ is flat above $x \in X$, one needs to show that for all $\rho \in t^{-1}(W) \subseteq R$ with $t(\rho) = x$, the local ring homomorphism
$$\oh_{X,x} \stackrel{s}{\arr} \oh_{R,\rho} \arr \oh_{R,\rho} / t^*f$$
is flat.  By the local criterion for flatness, this reduces to showing that $t^*f \tensor 1$ is a non-zero divisor in $\oh_{R,\rho} \tensor_{\oh_{X,x}}\oh_{X,x}/\fm_x$, where $\fm_x \subseteq \oh_{X,x}$ is the maximal ideal.  We conclude:
\begin{prop}  \label{flat_prop}
With the notation above, the composition $t^{-1}(W) \arr R \stackrel{s}{\arr} X$ is flat above $x \in X$ if $t^*f$ does not vanish at any associated point in $s^{-1}(x)$. \epf
\end{prop}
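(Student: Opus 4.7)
The plan is a direct application of the local criterion for flatness recalled just above. Since $R \rrarrows X$ is an fppf $S$-groupoid, the morphism $s : R \arr X$ is flat, and the closed immersion $t^{-1}(W) \hookarr R$ is cut out by the single section $t^*f$ of the line bundle $t^*L$. Fix $\rho \in t^{-1}(W)$ with $s(\rho) = x$, and set $A = \oh_{X,x}$ and $B = \oh_{R,\rho}$. After trivializing $t^*L$ in a neighborhood of $\rho$, the local ring of $t^{-1}(W)$ at $\rho$ becomes $B/(t^*f)$, so what must be shown is flatness of the composition $A \arr B \arr B/(t^*f)$.

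I would then apply the cited local flatness criterion to the flat local homomorphism $\phi : A \arr B$. This reduces the problem to showing that $t^*f \tensor 1$ is a non-zero divisor in the fiber ring
$$B \tensor_A A/\fm_A \;=\; \oh_{R,\rho}/\fm_x \oh_{R,\rho} \;=\; \oh_{s^{-1}(x),\rho},$$
i.e. in the local ring of the scheme-theoretic fiber $s^{-1}(x)$ at the point $\rho$.

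To finish, I would invoke the standard commutative-algebra fact that in a noetherian local ring an element is a non-zero divisor if and only if it lies in no associated prime, together with the bijection between associated primes of a local ring $\oh_{Y,\rho}$ and associated points of $Y$ specializing to $\rho$ (applied with $Y = s^{-1}(x)$). Under the hypothesis that $t^*f$ does not vanish at any associated point of $s^{-1}(x)$, for every $\rho \in t^{-1}(W) \cap s^{-1}(x)$ the element $t^*f$ avoids every associated prime of $\oh_{s^{-1}(x),\rho}$, hence is a non-zero divisor there, which is precisely the input required by the local criterion.

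There is essentially no obstacle here: the noetherian hypotheses needed for the criterion are guaranteed by $S$ noetherian together with the fppf (hence locally finitely presented) hypothesis on $R$, and the rest is pure translation between commutative algebra and geometry. The substantive content of the proposition is the observation that the fiber-wise non-zero divisor condition produced by the local criterion is cleanly encoded as nonvanishing at the (finitely many) associated points of the fiber $s^{-1}(x)$, which is a condition one can check in practice.
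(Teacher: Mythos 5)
Your proof is correct and follows exactly the route the paper takes: the paper's own argument is the paragraph preceding the proposition, which identifies the local ring of $t^{-1}(W)$ at $\rho$ as $\oh_{R,\rho}/(t^*f)$, invokes the stated local criterion for flatness over the flat map $\oh_{X,x} \to \oh_{R,\rho}$, and translates the resulting non-zero-divisor condition in the fiber ring into nonvanishing at the associated points of $s^{-1}(x)$. The only difference is cosmetic: you correctly take $\rho$ with $s(\rho)=x$, where the paper's text has an apparent typo writing $t(\rho)=x$.
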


\subsection{Stacky interpretation} If $s,t: R \rrarrows X$ is an fppf $S$-group, then by \cite[Cor 10.6]{lmb} the quotient stack $\cX=[X/R]$ is an Artin stack (with separated and quasi-compact diagonal).  A line bundle $L$ on $X$ with $R$-action is precisely the data of a line bundle $\cL$ on $\cX$ and the $R$-invariant sections $\Gamma(X,L)^R = \Gamma(\cX,\cL)$.  Conversely, given any Artin stack $\cX$ and morphism $X \arr \cX$, the fiber product $R = X \times_{\cX} X$ with two projections $p_1,p_2: R \rrarrows X$ forms an groupoid (with the identity, inverse and composition naturally defined).  If $X \arr \cX$ is flat and locally of finite presentation, then $R  = X \times_{\cX} X \rrarrows X$ is an fppf $S$-groupoid.  If in addition $X \arr \cX$ is surjective, then $\cX \cong [X/R]$.

If $g: W \arr X$ is a morphism of schemes, then the $S$-groupoid $R|_W \rrarrows W$ obtained from slicing $X \times_{\cX} X= R \rrarrows X$ as in \ref{slicing_subsect} is the same as $R|_W = W \times_{\cX} W \rrarrows W$.

\subsection{Computing invariants}  To compute the global sections of $\Gamma(\cX,\cL)$, one may choose any flat, finitely presented and surjective morphism $p: X \arr \cX$ and compute the $R$-invariant sections $\Gamma(X,p^*\cL)$ where $R = X \times_{\cX} X$.  Furthermore, if $\cX$ is normal, then one may choose a flat, finitely presented $p: X \arr \cX$ such that image $p(U) \subseteq \cX$ has a complement of codimension at least 2.

 Suppose $G$ is a smooth group scheme acting on a normal scheme $X$ and $L$ is a $G$-linearization.  In many invariant theory problems, one wants to compute the graded ring of invariants $\bigoplus_{k \ge 0} \Gamma(X, L^k)^G$ as a subring of $\bigoplus_{k \ge 0} \Gamma(X,L^k)$.  If one has guesses for generators $X_i$ and relations $R_j$, then one may check that $\bigoplus_{k \ge 0} \Gamma(X, L^k)^G = \ZZ[X_i]/(R_j)$ after slicing by a morphism $g: W \arr X$ such that the composition $R \times_{t,X,g} W \arr R \stackrel{s}{\arr} X$ is flat with image having complement of codimension at least 2 (or, in other words, $p: W \arr [X/G]$ is flat such that $[X/G] \setminus p(W)$ has codimension at least 2).  Therefore, the invariant calculation can be simplified if one chooses the slice $W \arr X$ cleverly such that the invariant sections $\Gamma(W, \cL^k)^{R|_W}$ are easily computable.

\section{The Gel'fand-MacPherson correspondence and Gale transform} \label{section-gelfand}

We offer a generalization of the classical Gel'fand-MacPherson correspondence (\cite{gelfand-macpherson}) and Gale transform (\cite{gale}).

\subsection{Grassmanians} 
Let the base ring be the integers $\ZZ$.
Let $\GGr(k-1, n-1) = \Gr(k,n)$ be the Grassmanian of $(k-1)$-dimensional hyperplanes of $\PP^{n-1}$ ($k$-dimensional linear subspaces of $\AA^n$) for $0 < k < n$.  Since every $k$-linear subspace can be represented by a basis of $k$-vectors in $\AA^n$ and two basis differ by an element of $GL_k$, we can realize the Grassmanian as the geometric quotient $U_{k,n} / \GL_k$, where $GL_k$ acts on the set of $(k \times n)$-matrices $\AA^{kn}$ by left multiplication and $U_{k,n} \subseteq \AA^{kn}$ is the open $GL_k$-invariant subscheme consisting of matrices of full rank.  The Grassmanian $\Gr(k,n)$ is a smooth projective scheme of dimension $kn - k^2 = (n-k)k$.

The action of $\GL_n$ on $\AA^{kn}$ by right multiplication by the transpose induces an action on the quotient $\Gr(k,n) = U_{k,n} / \GL_k$, which corresponds to the action on $k$-dimensional subspaces of $\AA^n$ from $\GL_n$ acting linearly on $\AA^n$.  This realizes the quotient stack
$$[\Gr(k,n)/\GL_n] \subseteq [\AA^{kn} / (\GL_k \times \GL_n)]$$
as the open substack of $(k \times n)$-matrices of full rank where $\GL_k$ is acting on the left and $\GL_n$ is acting on the right.  The codimension of the complement is at least 2.

\subsection{Picard group} \label{grass_picard}
The Picard group of $\Gr(k,n) \cong \ZZ$ with very ample generator $L = (\bigwedge^n V)^{\dual}$ where $V \subseteq \oh^n$ is the universal rank $k$ sub-vector bundle on $\Gr(k,n)$.  The vector bundle $\oh^n$ on $\Gr(k,n)$ inherits a $\GL_n$-action for the isomorphism $\alpha: \sigma^* \oh^n \iso p_2^* \oh^n$ corresponding to the composition $\GL_n \times \Gr(k, n) \stackrel{p_1}{\arr} \GL_n \stackrel{\T}{\arr} \GL_n$ where $\T$ denotes transpose.  (This corresponds to the $\GL_n \times \GL_k$-action on the the trivial bundle $\AA^{kn} \times \AA^n$ given by $(g,h) \cdot (A,v) = (hAg^{\T}, gv)$ for $(g,h) \in \GL_n \times \GL_k$ and $(A,v) \in \AA^{kn} \times \AA^n$.)  The universal subbundle $V \subseteq \oh^n$ is $\GL_n$-invariant and therefore induced a $\GL_n$-linearization on $L = (\bigwedge^n V)^{\dual}$.  Denote $\cL$ the corresponding line bundle on the quotient stack $[\Gr(k,n) / \GL_n]$.  The Picard group of $[\Gr(k,n) / \GL_n] \cong \ZZ \langle \cL \rangle \oplus \ZZ \langle \oh^{(1)} \rangle$ where $\oh^{(1)}$ corresponds to the $\GL_n$-linearization of the structure sheaf $\oh_{\Gr(k,n)}$ given by the character $\GL_n \stackrel{\det^{-1}}{\longrightarrow} \GG_m$.

Alternatively, $\Pic([\AA^{kn} / (\GL_k \times \GL_n)]) \cong \ZZ \langle \cM_{0,1} \rangle \oplus \ZZ \langle \cM_{1,0} \rangle$ where $\cM_{i,j}$ is the $\GL_k \times \GL_n$-linearization of the trivial sheaf corresponding to the product of the characters $\det^{-i}: \GL_k \arr \GG_m$ and $\det^{-j}: \GL_n \arr \GG_m$.  The line bundles $\cM_{1,0}$ and $\cM_{0,1}$ restrict under the inclusion $[\Gr(k,n)/\GL_n] \subseteq [\AA^{kn} / (\GL_k \times \GL_n)]$ to $\cL$ and $\oh^{(1)}$, respectively.

\subsection{The correspondence}

Let $n$ be a positive integer and $0< k_1, \ldots, k_m < n$ positive integers such that $k= k_1 + \cdots + k_m>n$.  We consider the diagonal action of $\GL_n$ on $\Gr(k_1, n) \times \cdots \times \Gr(k_m, n)$ and we study the quotient stack $[\Gr(k_1, n) \times \cdots \times \Gr(k_m, n) / \GL_n]$.  By using the above representation of the grassmanian as a quotient, we see that if we consider the subgroup $H=\GL_{k_1} \times \cdots \times \GL_{k_m} \subseteq GL_{k}$ and $H$ acts via left multiplication on the set of $k \times n$-matrices $\AA^{kn}$, then
$$ [\Gr(k_1, n) \times \cdots \times \Gr(k_m, n) / \GL_n] \subseteq [\AA^{kn} / (H \times \GL_n)] $$
is the open substack consisting of blocks of $(k_i \times n)$-full rank matrices (that is, $[\Gr(k_1, n) \times \cdots \times \Gr(k_m, n) / \GL_n]  = [U_{k_1,n} \times \cdots \times U_{k_m,n} / (H \times \GL_n)]$).  The complement of this inclusion is a closed substack of codimension at least 2.

By taking the quotient of $\GL_n$ first on the open locus $U_{nk} \subset \AA^{kn}$ of full rank matrices, we have inclusions of open substacks
$$ [\Gr(k_1, n) \times \cdots \times \Gr(k_m, n) / \GL_n] \subseteq [\Gr(n,k) / H] \subseteq [\AA^{kn} / (H \times \GL_n)] $$
with the complement of each open inclusion of codimension at least 2.  Also note that $[\AA^{kn} / (H \times \GL_n)]$ is normal so that the restrictions of line bundles under these inclusions induces isomorphisms on Picard groups.  

The Picard group of $[\AA^{kn} / (H \times \GL_n)] \cong \ZZ^{m+1}$.  For each $\underline{j} = (j_1, \cdots, j_{m+1}) \in \ZZ^{m+1}$, let $\cM_{\underline{i}}$ be the line bundle on $[\AA^{kn} / (H \times \GL_n)]$ corresponding to the $\GL_{k_1} \times \cdots \times \GL_{k_m} \times \GL_n$-linearization of the structure sheaf given by the product of the characters $\det^{-j_1} \bsp : \GL_{k_1} \arr \GG_m, \ldots, \det^{-j_m}: \GL_{k_m} \arr \GG_m$ and $\det^{-j_{m+1}} \bsp : \GL_n \arr \GG_m$.  Let $L_i$ denote the ample generator of $\Pic(\Gr(k_i,n))$ with its $\GL_n$-linearization as in \ref{grass_picard} inducing $\cL_i$ on $[\Gr(k_i,n)/\GL_n]$ and $\oh^{(j_{m+1})}$ be the line bundle on $[\Gr(k_1, n) \times \cdots \times \Gr(k_m, n) / \GL_n]$ corresponding to the $\GL_n$-linearization of the structure given by the character $\det^{-j_{m+1}}\bsp : \GL_n \arr \GG_m$.  Similarly, let $\cL$ be the line bundle on $[\Gr(n,k)/H]$ corresponding to the ample generator and $\oh^{(j_1, \cdots j_m)}$ the line bundle corresponding to the $H$-linearization of the structure sheaf given by the product of $\det^{-j_1}\bsp : \GL_{k_1} \arr \GG_m, \ldots, \det^{-j_m} \bsp : \GL_{k_m} \arr \GG_m$.  Then
$$\xymatrix{
\Pic([\Gr(k_1, n) \times \cdots \times \Gr(k_m, n) / \GL_n]) 	& \Pic([\AA^{kn} / (H \times \GL_n)]) \ar[r] \ar[l]	& \Pic([\Gr(n,k) / H]) \\
\cL_1^{\tensor j_1} \boxtimes \cdots \boxtimes \cL_m^{\tensor j_m} \tensor \oh^{(j_{m+1})}  		& (j_1, \ldots, j_{m+1}) \ar[r] \ar[l]		&  \oh^{(j_1, \ldots, j_m)} \tensor \cL^{\tensor j_{m+1}} 
}$$

We have established the following general Gel'fand-MacPherson correspondence, which was also proven by Yi Hu in \cite[Theorem 4.2]{hu}:

\begin{prop} \label{proposition-gm}
 Let $n$ be a positive integer and $0< k_1, \ldots, k_m < n$ positive integers such that $k= k_1 + \cdots + k_m>n$.  The open immersion $[\Gr(k_1, n) \times \cdots \times \Gr(k_m, n) / \GL_n] \subseteq [\Gr(n,k) / H]$ of Artin stacks induces an isomorphism of Picard groups
$$\ZZ^{m+1} \cong \Pic([\Gr(n,k) / H]) \iso \Pic( [\Gr(k_1, n) \times \cdots \times \Gr(k_m, n) / \GL_n])$$
For a line bundle $\cL$ on $[\Gr(n,k) / H]$, we have an isomorphism of GIT quotients
$$\Gr(k_1, n) \times \cdots \times \Gr(k_m, n) //_{\cL} \GL_n \iso \Gr(n,k) //_{\cL} H$$
where
$$\begin{aligned} 
	\Gr(k_1, n) \times \cdots \times \Gr(k_m, n) //_{\cL} \GL_n & = \Proj \bigoplus_{d \ge 0} \Gamma([\Gr(k_1, n) \times \cdots \times \Gr(k_m, n) / \GL_n], \cL^{\tensor k}) \\
	\Gr(n,k) //_{\cL} H & = \Proj \bigoplus_{d \ge 0} \Gamma([\Gr(n,k) / H], \cL^{\tensor k})
\end{aligned}$$ \epf
\end{prop}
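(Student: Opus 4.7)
The strategy is to do all the real work on the common normal ambient stack $\cY := [\AA^{kn}/(H \times \GL_n)]$ and then transfer the information to both open substacks using the codimension-$\ge 2$ Hartogs principle (the proposition at the end of Section~\ref{slicing_subsect}). First, I would check that both open immersions
$$\cU_1 := [\Gr(k_1,n) \times \cdots \times \Gr(k_m,n)/\GL_n] \hookarr \cY \qquad \text{and} \qquad \cU_2 := [\Gr(n,k)/H] \hookarr \cY$$
have complement of codimension $\ge 2$ in $\cY$. For $\cU_1$ the complement is the union of the determinantal loci where some $(k_i \times n)$-block fails to have full rank $k_i$; its codimension is $n - k_i + 1 \ge 2$ because $k_i < n$. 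For $\cU_2$, after first quotienting by the free $\GL_n$-action on the locus of $(k \times n)$-matrices of rank $n$, the complement is the determinantal locus of rank $< n$, of codimension $k - n + 1 \ge 2$ because $k > n$.

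Second, I would compute $\Pic(\cY) \cong \ZZ^{m+1}$ directly. Since $\AA^{kn}$ is smooth affine with trivial Picard group, equivariant line bundles on $\cY$ are classified by the character lattice of the connected group $H \times \GL_n$, which is free of rank $m+1$ and generated by the negative determinants of the $m+1$ factors $\GL_{k_1}, \ldots, \GL_{k_m}, \GL_n$. This realizes the line bundles $\cM_{\underline{j}}$ of the text as a $\ZZ^{m+1}$-basis of $\Pic(\cY)$. Applying the Hartogs proposition from Section~\ref{slicing_subsect} to $\cY$ and its two codimension-$\ge 2$ open substacks then yields $\Pic(\cY) \iso \Pic(\cU_1)$ and $\Pic(\cY) \iso \Pic(\cU_2)$. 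Matching these generators with the geometric generators $\cL_i$, $\oh^{(1)}$, $\cL$, $\oh^{(1,\ldots)}$ is a direct consequence of the explicit descriptions in \S\ref{grass_picard} using the universal subbundle $V \subseteq \oh^n$ and $L = (\bigwedge^n V)^{\dual}$; this establishes the claimed Picard isomorphism.

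Third, I would apply the same slicing/Hartogs principle at the level of global sections: for any line bundle $\cM$ on $\cY$, both restrictions $\Gamma(\cY, \cM) \iso \Gamma(\cU_i, \cM|_{\cU_i})$ are isomorphisms. Summing $\cM = \cL^{\tensor d}$ over $d \ge 0$ and taking $\Proj$ produces the desired isomorphism of GIT quotients. The main obstacle is the bookkeeping in the second step, namely carefully matching signs of the determinant characters on each $\GL$-factor with the geometric ample generators via $(\bigwedge^n V)^{\dual}$; the codimension estimates and the Hartogs transfer are routine once $\cY$ is in view.
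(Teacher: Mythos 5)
Your proposal is correct and follows essentially the same route as the paper: both pass through the common normal ambient stack $[\AA^{kn}/(H\times \GL_n)]$, verify that each open substack has complement of codimension at least $2$ (your determinantal codimension counts $n-k_i+1\ge 2$ and $k-n+1\ge 2$ are exactly the needed estimates), identify $\Pic$ of the ambient stack with the character lattice $\ZZ^{m+1}$, and transfer both line bundles and their invariant sections via the Hartogs proposition before taking $\Proj$. The only cosmetic difference is that the paper records the two opens as nested, $[\prod_i \Gr(k_i,n)/\GL_n]\subseteq [\Gr(n,k)/H]\subseteq [\AA^{kn}/(H\times\GL_n)]$, while you compare each separately to the ambient stack; this changes nothing of substance.
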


\subsection{The groups $\SL_n$ and $\PGL_n$}  If one considers the action of $\SL_n$ and $\PGL_n$ on $\PP^{n-1}$ instead of $\GL_n$ and the induced actions on product of Grassmanians, the Picard groups on the quotient stacks are different but of course the GIT quotients are the same.  

If $L$ is the very ample generator of the Picard group of $\Gr(k,n)$, then the inclusion $\SL_n \subseteq \GL_n$ induces a unique $\SL_n$-linearization of $L$.  There is no $\PGL_n$-linearization of $L$ but there is a unique $\PGL_n$-linearization of $L^{\tensor n}$.

Define the group $S(H \times \GL_n) \subseteq H \times \GL_n \subseteq \GL_{n+k}$ and $S(H) \subseteq H \subseteq \GL_k$ consisting of matrices of determinant 1.  There are non-canonical inclusions $H \hookarr S(H \times \GL_n)$ and $\GL_k \hookarr S(H \times \GL_n)$ such that $S(H \times \GL_n) / H \cong \SL_n$ and $S(H \times \GL_n) / \GL_k \cong S(H)$.  
The inclusion $S(H \times \GL_n) \subseteq H \times \GL_n$ induces a morphism of quotient stacks 
$$[\AA^{kn} / S(H \times \GL_n)] \arr [\AA^{kn} / (H \times \GL_n)]$$ and a surjection on Picard groups
$$\xymatrix{
\Pic([\AA^{kn} / (H \times \GL_n)]) \ar@{->>}[r] \ar[d]^{\wr} 		& \Pic([\AA^{kn} / S(H \times \GL_n)]) \ar[d]^{\wr} \\
\ZZ^{m+1} \ar[r]			& \ZZ^{m+1} / (j_1, \ldots, j_m, j_1 + \cdots + j_m) \cong \ZZ^m
}$$
We have inclusions of open substacks
$$ [\Gr(k_1, n) \times \cdots \times \Gr(k_m, n) / \SL_n] \subseteq [\Gr(n,k) / S(H)] \subseteq [\AA^{kn} / S(H \times \GL_n)] $$
which for a line bundle $\cL \in [\Gr(n,k) / S(H)]$ induces an isomorphism of GIT quotients
$$\Gr(k_1, n) \times \cdots \times \Gr(k_m, n) //_{\cL} \SL_n \iso \Gr(n,k) //_{\cL} S(H)$$

Similarly, define the group $P(H \times \GL_n) = (H \times \GL_n)/\GG_m$ and $P(H) = H / \GG_m$ where $\GG_m$ is embedded diagonally.  The surjection $H \times \GL_n \arr P(H \times \GL_n)$ induces a rigidification morphism of quotient stacks
$$[\AA^{kn} / (H \times \GL_n)] \arr [\AA^{kn} / P(H \times \GL_n)]$$ 
and an injection on Picard groups
$$\xymatrix{
\Pic([\AA^{kn} / P(H \times \GL_n)]) \ar[r] \ar[d]^{\wr} 		& \Pic([\AA^{kn} / (H \times \GL_n)]) \ar[d]^{\wr} \\
\ZZ^{m} \cong \{(j_1, \ldots, j_{m+1} \, | \, j_1 k_1 + \cdots + j_m k_m + j_{m+1} n = 0 \} \ar@{^(->}[r]			&  \ZZ^{m+1}
}$$
We have inclusions of open substacks
$$ [\Gr(k_1, n) \times \cdots \times \Gr(k_m, n) / \PGL_n] \subseteq [\Gr(n,k) / S(H)] \subseteq [\AA^{kn} / P(H \times \GL_n)] $$
which for a line bundle $\cL \in [\Gr(n,k) / S(H)]$ induces an isomorphism of GIT quotients
$$\Gr(k_1, n) \times \cdots \times \Gr(k_m, n) //_{\cL} \PGL_n \iso \Gr(n,k) //_{\cL} P(H)$$


The most classical Gel'fand-MacPherson correspondence is in the case that each $k_i = 1$.

\begin{cor} Let $m>n$ be positive integers. There is an open immersion of Artin stacks $[(\PP^{n-1})^m/ \GL_n] \subseteq [\Gr(n,m) / \GG_m^m]$ which induces an isomorphism of Picard groups
$$\Pic([\Gr(n,m) / \GG_m^m]) \iso \Pic( [(\PP^{n-1})^m/ \GL_n])$$
For a line bundle $\cL$ on $[\Gr(n,k) / H]$, there is an isomorphism of GIT quotients
$$(\PP^{n-1})^m //_{\cL} \GL_n \iso \Gr(n,k) //_{\cL} \GG_m^m$$ \epf
\end{cor}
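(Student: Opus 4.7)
The plan is to derive the corollary as a direct specialization of Proposition \ref{proposition-gm} to the case $k_1 = k_2 = \cdots = k_m = 1$. First I would check that the hypotheses of the proposition are satisfied under this specialization: the sum is $k = k_1 + \cdots + k_m = m$, and the assumption $m > n$ of the corollary is precisely the hypothesis $k > n$ of the proposition; moreover each $k_i = 1$ satisfies $0 < k_i < n$ (one may assume $n \ge 2$, since for $n = 1$ all objects collapse and the claim is vacuous).

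Next I would identify the data appearing in Proposition \ref{proposition-gm} with the data appearing in the corollary. By convention $\Gr(1,n) = \PP^{n-1}$, so the product $\Gr(k_1, n) \times \cdots \times \Gr(k_m, n)$ becomes $(\PP^{n-1})^m$ with its diagonal $\GL_n$-action. The subgroup $H = \GL_{k_1} \times \cdots \times \GL_{k_m} \subseteq \GL_k$ becomes $\GL_1 \times \cdots \times \GL_1 = \GG_m^m \subseteq \GL_m$, and the ``other'' Grassmannian $\Gr(n, k)$ becomes $\Gr(n, m)$. Under these identifications, the open immersion of Artin stacks provided by the proposition,
\[
[\Gr(k_1, n) \times \cdots \times \Gr(k_m, n) / \GL_n] \subseteq [\Gr(n,k) / H],
\]
is exactly the asserted open immersion $[(\PP^{n-1})^m / \GL_n] \subseteq [\Gr(n,m) / \GG_m^m]$.

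Finally, the remaining two assertions of the corollary are instances of the corresponding assertions of the proposition under the same dictionary: the induced isomorphism on Picard groups and the induced isomorphism of projective GIT quotients are obtained directly by substituting $k_i = 1$ and $H = \GG_m^m$ into the formulas of Proposition \ref{proposition-gm}. There is no genuine obstacle here; the content of the corollary is entirely absorbed by the more general statement, and the proof amounts to unwinding the notation in the $k_i = 1$ case.
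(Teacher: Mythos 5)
Your proof is correct and matches the paper exactly: the paper states this corollary as the immediate specialization of Proposition \ref{proposition-gm} to the case $k_i=1$, so that $k=m$, $H=\GG_m^m$, and $\Gr(1,n)=\PP^{n-1}$, with no further argument given. Your unwinding of the dictionary (including noting that $m>n$ is the hypothesis $k>n$) is precisely what is intended.
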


\subsection{A generalization to flag varieties} \label{subsection-flag}

We can generalize the main result of \cite{borcea} which offered birational equivalences between certain flag associations.   

Let $0 < d_1 < d_2 < \cdots d_k=d < n$ be integers and $\underline{d} = (d_1, \ldots d_k)$.  The flag variety $F(\underline{d}, n)$ parameterizes flags $0 \subseteq V_1 \subseteq \cdots \subseteq V_k \subseteq \AA^n$ with $\dim V_i = d_i$.   

Let $U_{dn} \subseteq \AA^{dn}$ be the open subscheme consisting of $d \times n$ matrices where for each $1 \le i \le k$, the first $d_i$ rows are full rank.  The group $\GL_d$ acts on $\AA^{dn}$ by left multiplication.  Fix a representation of a flag in $U_{dn}$ (for instance, $\begin{pmatrix} \id_d & 0 \end{pmatrix}$).  The stabilizer $P \subseteq \GL_d$ is a parabolic subgroup and  $F(\underline{d}, n)$ is the geometric quotient $U/P$.  If $0 \subseteq V_1 \subseteq \cdots V_k \subseteq \oh^n$ is the universal flag on $F(\underline{d}, n)$, then $L = (\bigwedge^d V_k)^{\dual}$ is a very ample line bundle.  The group $\GL_n$ acts on $F(\underline{d},n)$ and $L$ has a $\GL_n$-linearization as above.

Let $\underline{d}_i = (d_{i1}, \ldots, d_{il_r})$ with $0 < d_{i1} < \cdots < d_{il_i} =d < n$.  Let $\underline{e} = (e_1, \ldots, e_s)$ with $0 < e_1 < \cdots < e_s < n$ be integers.  Set $d = d_1 + \cdots + d_r$. Let $P_i \subseteq \GL_{d_i}$ be the parabolic subgroup fixing a $d_i \times n$-matrix representing some flag in $F(\underline{d}_i,n)$ and $Q \subseteq \GL_n$ be the parabolic subgroup fixing a $n \times d$-matrix representing some flag in $F(\underline{e},d)$.  

The diagonal action of $P=P_1 \times \cdots \times P_r \subseteq \GL_d$ on $F(\underline{e}, d)$ and the action of $Q \subseteq \GL_n$ on the product $F(\underline{d}_1,n) \times \cdots F(\underline{d}_r,n)$ induces
$$\xymatrix{
					& [\AA^{dn} / (P \times Q)] \\
[F(\underline{d}_1,n) \times \cdots F(\underline{d}_r) / Q] \ar@{^(->}[ur] \ar@{-->}[rr]^{\Psi}		&	&
[F(\underline{e}, d) / P] \ar@{_(->}[ul]
}$$
with $\Psi$ a birational morphism which is an isomorphism over an open substack having complement of codimension at least 2.  We conclude:
\begin{prop} \label{proposition-flag}
With the above notation, there is an isomorphism of Picard groups
$$\Pic([F(\underline{e}, d) / P] ) \iso \Pic([F(\underline{d}_1,n) \times \cdots F(\underline{d}_r, n) / Q])$$
For a line bundle $\cL$ on $[F(\underline{d}_1,n) \times \cdots F(\underline{d}_r, n) / Q]$, there is an isomorphism of GIT quotients
$$F(\underline{d}_1,n) \times \cdots F(\underline{d}_r, n) //_{\cL} \, Q \iso  F(\underline{e}, d) //_{\Psi^* \cL} \, P$$
\epf
\end{prop}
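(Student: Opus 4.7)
The plan is to mimic the strategy used in Proposition \ref{proposition-gm}: realize both quotient stacks as open substacks of the common normal ambient stack $[\AA^{dn}/(P\times Q)]$ whose complements have codimension at least $2$, and then invoke the Hartogs-type principle from Section \ref{section-slicing}.

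First I would identify the left-hand stack as $[U_L/(P\times Q)]$, where $U_L\subseteq\AA^{dn}$ is the open subscheme of $d\times n$ matrices whose rows, partitioned into blocks of sizes $d_1,\ldots,d_r$, satisfy in each $i$-th block the partial-rank conditions that present $F(\underline{d}_i,n)$ as a quotient of an open in $\AA^{d_i n}$; this is immediate from the block-decomposition $P=P_1\times\cdots\times P_r$ acting block-diagonally on the left, followed by the $Q$-quotient on the right. Symmetrically, letting $E_\bullet\subseteq\AA^n$ denote the standard flag stabilized by $Q$, I would take $U_R\subseteq\AA^{dn}$ to be the open locus of matrices $M$, viewed as surjections $\AA^n\twoheadrightarrow\AA^d$, for which $M(E_j)\subseteq\AA^d$ has the expected dimension $e_j$ for each $j$. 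The assignment $M\mapsto(M(E_j))_j$ is visibly $Q$-invariant and $P$-equivariant, and should induce an isomorphism $U_R/Q\iso F(\underline{e},d)$, hence an open immersion $[F(\underline{e},d)/P]\hookrightarrow[\AA^{dn}/(P\times Q)]$.

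Granted both open immersions, the remaining ingredient is the codimension estimate. Each condition defining the complement of $U_L$ or of $U_R$ is the vanishing of a determinantal minor, which on the affine space $\AA^{dn}$ cuts out a closed subscheme of codimension at least $2$; the codimension is preserved on passing to the smooth stack $[\AA^{dn}/(P\times Q)]$. Because this ambient stack is normal, the Hartogs-type slicing proposition of Section \ref{section-slicing} implies that restriction of line bundles from $[\AA^{dn}/(P\times Q)]$ to either open substack induces an isomorphism on Picard groups and on global sections of every line bundle. Composing the two isomorphisms yields the Picard group identification, and applying $\Proj$ to the resulting identification of graded invariant rings produces the claimed isomorphism of GIT quotients.

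The main obstacle I expect is the identification $U_R/Q\cong F(\underline{e},d)$: one needs to verify that the $Q$-orbit of an $M\in U_R$ is precisely the fiber of $M\mapsto M(E_\bullet)$ over a given flag in $F(\underline{e},d)$, which amounts to transitivity of $Q$ on pairs $(\ker M, M|_{\AA^n/\ker M})$ compatible with a prescribed target flag in $\AA^d$ and with the reference flag $E_\bullet$. This is essentially a Witt-type extension argument and implicitly requires the natural hypothesis $e_s\le d$ so that $M(E_\bullet)$ actually lands in $F(\underline{e},d)$. The codimension estimates for the complements of $U_L$ and $U_R$ are standard determinantal bounds, and the stacky Hartogs argument is exactly parallel to the one already deployed in Proposition \ref{proposition-gm}.
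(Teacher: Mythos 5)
Your proposal follows the paper's argument exactly: both quotient stacks are realized as open substacks of the normal ambient stack $[\AA^{dn}/(P\times Q)]$ whose complements have codimension at least $2$, and the Hartogs-type proposition of Section \ref{section-slicing} then yields the isomorphism of Picard groups and of the graded rings of invariant sections, hence of the GIT quotients. The only slip is in phrasing: the complements are rank-drop (determinantal) loci, cut out by the simultaneous vanishing of all relevant maximal minors and of codimension $n-d_{ij}+1$ resp.\ $d-e_j+1$, not ``the vanishing of a determinantal minor'' (a single minor gives only codimension $1$); your identifications of $U_L$ and $U_R$, and the observation that $e_s\le d$ is implicitly required, are consistent with what the paper leaves tacit.
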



\subsection{Duality between subbundles and quotient bundles} \label{duality_sub}
There is an obvious isomorphism $\Gr(k, n) \iso \Gr(n-k,n)$ given functorially on a scheme $T$ by
$$\begin{aligned}
\Gr(k,n)(T)	& \arr \Gr(n-k,n) (T) \\
(V \subseteq \oh^n) &\mapsto ( (\oh^n/V)^\dual \subseteq \oh^n)
\end{aligned}$$

\begin{remark}
There is no apparent morphism of groupoids $(\GL_k \times U_{k,n} \rrarrows U_{k,n}) \arr (\GL_{n-k} \times U_{k,n-k} \rrarrows U_{k,n-k})$ induced from morphisms $U_{k,n} \arr U_{n-k,n}$ and $\GL_k \times U_{k,n} \arr \GL_{n-k} \times U_{k,n-k}$.
However, we can consider a bigger presentation of $\Gr(k,n) \cong \Gr(n-k,n)$ incorporating both representation.  The group $\GL_k \times \GL_{n-k}$ acts freely on $Y = \{ (A,B) \in U_{k,n} \times U_{n-k,n} \, | \, A B^{\T} = 0 \}$ such that $\Gr(k,n) \cong [Y/ (\GL_k \times \GL_{n-k})] \cong \Gr(n-k,n)$.
\end{remark}

\subsection{The Gale transform}
By combining the Gel'fand-MacPherson correspondence (Proposition \ref{proposition-gm}) with the duality between subbundles and quotient bundles, one can establish the Gale transform.

 \begin{prop} Let $n$ be a positive integer and $0< k_1, \ldots, k_m < n$ positive integers such that $k= k_1 + \cdots + k_m>n$.  Consider the diagram
$$\xymatrix{
 [\Gr(k_1, n) \times \cdots \times \Gr(k_m, n) / \GL_n] \ar@{^(->}[r] \ar@{-->}[d]^{\Phi}	& [\Gr(n,k) / H] \ar[d]^{\wr} \\
 [\Gr(k_1, k-n) \times \cdots \times \Gr(k_m, k -n)/ \GL_{k-n}] \ar@{^(->}[r]  	&  [\Gr(k-n,k) / H]
}$$
The \emph{Gale transform} $\Phi$ is a birational morphism which is an isomorphism in codimension 1 and induces an isomorphism
$$\Pic([\Gr(k_1, k-n) \times \cdots \times \Gr(k_m,k-n) / \GL_{k-n}]) \iso \Pic( [\Gr(k_1, n) \times \cdots \times \Gr(k_m, n) / \GL_n])$$
For a line bundle $\cL$ on $[\Gr(k_1, k-n) \times \cdots \times \Gr(k_m, k -n) / \GL_{k-n}]$, there is an isomorphism of GIT quotients
$$\Gr(k_1, n) \times \cdots \times \Gr(k_m, n) //_{\cL} \, \GL_n \arr \Gr(k_1, k-n) \times \cdots \times \Gr(k_m, k -n) //_{\Phi^* \cL} \, \GL_{k-n}$$ \epf
\end{prop}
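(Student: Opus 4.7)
The plan is to apply the Gel'fand-MacPherson correspondence (Proposition \ref{proposition-gm}) twice---once for the parameters $(k_1,\ldots,k_m;n)$ and once for $(k_1,\ldots,k_m;k-n)$---and then glue the two resulting open immersions by means of the duality between subbundles and quotient bundles of \S\ref{duality_sub}.

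First, Proposition \ref{proposition-gm} directly produces the top open immersion
$$[\Gr(k_1,n) \times \cdots \times \Gr(k_m,n) / \GL_n] \hookrightarrow [\Gr(n,k) / H]$$
with complement of codimension at least $2$. Applying the same proposition with $n$ replaced by $n'=k-n$ (the hypotheses $0<k_i<k-n$ and $k_1+\cdots+k_m>k-n$ are the natural nondegeneracy conditions for the Gale transform) gives the bottom open immersion
$$[\Gr(k_1,k-n) \times \cdots \times \Gr(k_m,k-n) / \GL_{k-n}] \hookrightarrow [\Gr(k-n,k) / H],$$
again with codimension at least $2$ complement. The right-hand vertical arrow is the $H$-equivariant duality isomorphism $\Gr(n,k) \iso \Gr(k-n,k)$ of \S\ref{duality_sub}; I would verify $H$-equivariance by working with the common presentation $Y = \{(A,B) \in U_{n,k} \times U_{k-n,k} \mid A B^{\T}=0\}$ with its free $\GL_n \times \GL_{k-n}$-action, on which $H \subseteq \GL_k$ acts compatibly with both quotient descriptions. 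This induces the desired isomorphism of quotient stacks $[\Gr(n,k)/H] \iso [\Gr(k-n,k)/H]$.

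The Gale transform $\Phi$ is then defined as the birational morphism obtained by composing the top open immersion, this duality isomorphism, and a partial inverse of the bottom open immersion on the open substack whose image in $[\Gr(k-n,k)/H]$ lands inside the image of the bottom GM embedding. Since both images are open substacks with codimension at least $2$ complement and all stacks in sight are normal, the locus where $\Phi$ is not defined has codimension at least $2$, so $\Phi$ is an isomorphism in codimension $1$. The isomorphism of Picard groups then follows from Hartogs' theorem applied to each normal stack, and the identification of GIT quotients is a formal consequence: the Proj construction only sees global sections of powers of $\cL$, and these are pulled back isomorphically at every stage.

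The principal technical obstacle is the second step: the natural $H$-action on $\Gr(n,k)$ comes from $H \subseteq \GL_k$ acting on $\AA^k$, while the action on $\Gr(k-n,k)$ is induced from the contragredient action of $H$ on $(\AA^k)^{\dual}$, which on each $\GL_{k_i}$-factor differs from the standard action by the transpose-inverse outer automorphism. One must check that, after passing to quotient stacks via the presentation $Y$, this outer automorphism is absorbed consistently, so that the Picard-group identifications coming from the two applications of Gel'fand-MacPherson match up under $\Phi^*$ without any unexpected character twist. Once this compatibility is verified, the remaining assertions are formal.
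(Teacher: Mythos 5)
Your proposal matches the paper's intended argument exactly: the paper offers no written proof beyond the remark that the result follows ``by combining the Gel'fand-MacPherson correspondence (Proposition \ref{proposition-gm}) with the duality between subbundles and quotient bundles,'' which is precisely your two applications of Proposition \ref{proposition-gm} glued along the duality of \S\ref{duality_sub}. Your additional care about the $H$-equivariance of the duality (via the presentation $Y$) and the implicit hypothesis $k_i < k-n$ needed for the bottom row to make sense are welcome details the paper omits.
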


\begin{remark}
Similarly one can write down a Gale transform for correspondences of products of flag varieties.  
\end{remark}

In the special case when each $k_i =1$, we recover:
\begin{cor}
Let $n<m$ be a positive integers.  Consider the diagram
$$\xymatrix{
 [(\PP^{n-1})^m) / \GL_n] \ar@{^(->}[r] \ar@{-->}[d]^{\Phi}			& [\Gr(n,m) / \GG_m^m] \ar[d]^{\wr} \\
 [(\PP^{m-n-1})^m/ \GL_{m-n}] \ar@{^(->}[r]  	&  [\Gr(m-n,m) / \GG_m^m]
}$$
The \emph{Gale transform} $\Phi$ is a birational morphism which is an isomorphism in codimension 1 which induces an isomorphism of GIT quotients
$$(\PP^{n-1})^m //_{\cL} \SL_n \iso (\PP^{m-n-1})^m //_{\cM} \SL_{m-n}$$
where $\cL = \oh(j_1) \boxtimes \cdots \oh(j_m)$ on $(\PP^{n-1})^m$ (resp. $\cL= \oh(j_1) \boxtimes \cdots \oh(j_m)$ on $(\PP^{m-n-1})^m$) with the natural $\SL_n$ (resp. $\SL_{m-n}$) linearization. 
\epf
\end{cor}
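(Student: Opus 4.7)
The plan is to derive this corollary as the $k_1 = \cdots = k_m = 1$ specialization of the preceding Gale transform Proposition. Under this specialization I would identify $\Gr(1,r) = \PP^{r-1}$ for each $r$, recognize $H = \GL_{k_1} \times \cdots \times \GL_{k_m}$ as the coordinate torus $\GG_m^m$, and note that $k = k_1 + \cdots + k_m = m$ so the hypothesis $k > n$ becomes $m > n$. Substituted into the Gale transform diagram, this gives precisely the square displayed in the corollary, with the two horizontal inclusions being instances of the Gel'fand-MacPherson correspondence for projective spaces.

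Next I would check that the right-hand vertical isomorphism $[\Gr(n,m)/\GG_m^m] \iso [\Gr(m-n,m)/\GG_m^m]$ coincides with the subbundle/quotient-bundle duality of Section \ref{duality_sub}. The duality $\Gr(n,m) \iso \Gr(m-n,m)$ sends $V \subseteq \oh^m$ to $(\oh^m/V)^{\dual}$, and it is equivariant for the coordinate torus $\GG_m^m \subseteq \GL_m$ because dualizing merely inverts torus weights, which can be absorbed into the identification of Picard groups. With this in hand, the Gale transform Proposition yields birationality of $\Phi$, its being an isomorphism in codimension one, and the $\GL$-version of the GIT isomorphism.

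The final step is to replace $\GL_n$ by $\SL_n$ (and $\GL_{m-n}$ by $\SL_{m-n}$). This is standard: the center $\GG_m \subseteq \GL_n$ acts trivially on $(\PP^{n-1})^m$, so any $\GL_n$-linearization of $\oh(j_1) \boxtimes \cdots \boxtimes \oh(j_m)$ differs from its restriction to $\SL_n$ only by a character of this $\GG_m$, which merely reindexes the Proj without changing it. Hence the $\GL_n$- and $\SL_n$-semistable loci agree, their GIT quotients coincide, and the same remark applies on the target side. If there is a main obstacle it is bookkeeping of linearizations: one must confirm that $\Phi^*\cL$ has the form $\oh(j_1) \boxtimes \cdots \boxtimes \oh(j_m)$ with the same weights $j_i$. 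This falls out of the explicit identification $\Pic([\AA^{mn}/(\GG_m^m \times \GL_n)]) \cong \ZZ^{m+1}$ developed earlier in Section \ref{section-gelfand}: the Gale transform swaps the $\GL_n$-factor for $\GL_{m-n}$ while leaving the $m$ torus weights untouched.
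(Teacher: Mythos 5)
Your proposal is correct and matches the paper's (implicit) proof: the corollary is stated as the immediate specialization $k_1=\cdots=k_m=1$ of the general Gale transform proposition, with $H=\GG_m^m$, $k=m$, and $\Gr(1,r)=\PP^{r-1}$, exactly as you describe. Your handling of the $\GL_n$-to-$\SL_n$ reduction is the same bookkeeping the paper carries out in its subsection on the groups $\SL_n$ and $\PGL_n$ (via $S(H\times\GL_n)$), so nothing essential is missing.
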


\section{Zariski-local description of the moduli space of ordered points in $\PP^1$} \label{section-ordered-points}

The purpose of this subsection is show how slicing can be used to produce two explicit Zariski-local of descriptions of GIT quotients in the case of ordered point in $\PP^1$.  The first Zariski-local description for the quotient simply reproduces the result \cite[Lemma 4.3]{hmsv} albeit in a slightly different language.  The second Zariski-local description has the advantage that they cover the GIT quotient.

\subsection{Setup}
We consider the symmetric case for the action of $\SL_2$ on $n$ ordered points in $\PP^1$ (that is, the unique $\SL_2$-linearization of $\oh(1) \boxtimes \cdots \boxtimes \oh(1))$.  The non-symmetric case can be reduced to the symmetric case (see \cite[Theorem 1.2]{hmsv}).  The GIT quotient is the projective scheme 

$$M_n = \Proj R \quad, \text{where} \quad R = \bigoplus_d \Gamma( (\PP^1)^n, \oh(d) \boxtimes \cdots \boxtimes \oh(d) )^{\SL_2}$$

The first fundamental theorem of invariant theory says that $R$ is generated by the invariants $X_{\Delta}$ where $\Delta$ is a directed graph of vertices labeled 1 through n which each vertex having degree d and
$$X_{\Delta} = \prod_{\text{edges} \, (ij)\in\Delta}  (x_iy_j - x_jy_i) \in \Gamma( (\PP^1)^n, \oh(d) \boxtimes \cdots \boxtimes \oh(d))^{\SL_2}$$
Kempe's theorem states that $R$ is generated by invariants of lowest degree (i.e. invariants of degree 1 if $n$ is even and degree 2 if $n$ is odd).  Let $I$ be the ideal of relations (that is, $I = \ker (\ZZ[X_{\Delta}] \arr R)$ where $\Delta$ varies over degree 1 invariants). The result \cite[Theorem 1.2]{hmsv} established that $I$ is generated by relations of degree 4.  The same authors more recently have shown that in general $I$ is generated by the sign, Pl\"ucker relations and simple binomial relations when $n \neq 6$ over $\ZZ[1/12!]$.  This gives a complete and beautiful description of the projective ring of invariants $R$.

\subsection{Zariski-local descriptions}

If $n$ is odd, there are no strictly semistable points and $\PGL_2$-acts freely on $(\PP^1)^{n, \s}$ as every stable configuration has at least three distinct points..  A Zariski-local description of the quotient is easy to give.  For instance, around the point $(0,1, \infty, p_4, \ldots, p_n)$, the quotient can be described locally as $W \cap (\PP^1)^{n,\ss}$ where $W=\{ (0, 1, \infty, q_4, \ldots, q_n) \} \subseteq (\PP^1)^n$.

If $n=2m$ is even, the situation is more subtle as the point $(0, \ldots, 0, \infty, \ldots, \infty)$
is strictly semistable.  We will give two Zariski-local descriptions of the GIT quotient around this point.

\subsection{First description}
Consider the $\SL_2$-invariant open affine subscheme 
$$U = \{ P_i \ne P_j \textrm{ for } 1 \leq i \leq m < j \leq n \} \subseteq (\PP^1)^n$$
which is the non-vanishing locus of the section
$$s= \prod_{i \leq m < j \leq n} (x_iy_j - x_j y_i) \in \Gamma( (\PP^1)^n, \oh(m) \boxtimes \cdots \boxtimes \oh(m))^{\SL_2}$$
By slicing the groupoid $\SL_2 \times U \rrarrows U$ by the closed subscheme $W=\{P_1=0, P_n = \infty\} \hookarr U$, we obtain the groupoid $R|_W \rrarrows W$ where $R|_W \subseteq \SL_2 \times U$ is defined by the vanishing of $(x_1, \sigma^*x_1, y_n, \sigma^* y_n)$.  Since $\sigma^*x_1 = ax_1+by_1$ and $\sigma^*y_n = c x_n + d y_2$, $R|_W$ is defined by $x_1=y_n=b=c=0$ so that $R|_W = \GG_m \times W$.    Using Proposition \ref{flat_prop}, one sees that $\SL_2 \times W \stackrel{\sigma}{\arr} U$ is faithfully flat.  One sees that the sliced groupoid $R|_W \rrarrows W$ is the same as the groupoid induced from the action of $\GG_m \subseteq \SL_2$ on $W$.  In other words, if $\cU = [U/\SL_2]$, the composition $W \hookarr U \arr \cU$ is faithfully flat and an $\SL_2$-torsor; that is, $\cU \cong [W / \GG_m]$.  

We can write $W = \Spec \ZZ[x_i, y_j]_{x_i y_j -1}$ where $\GG_m = \Spec \ZZ[t]_t$ acts via $x_i \mapsto t x_i$, $y_j \mapsto t^{-1} y_j$.  The invariants of this action are clear:  if we set $W_{ij} = x_i y_j$, then 
$$\Gamma(W, \oh_W)^{\GG_m} =  \ZZ[ W_{ij}]_{W_{ij} -1} / (W_{ij} W_{kl} - W_{il} W_{kj} )$$. 

This reproves:

\begin{prop} \cite[Lemma 4.3]{hmsv}  \label{first_desc}
If $n = 2m$, $R = \bigoplus_d \Gamma( (\PP^1)^n, \oh(d) \boxtimes \cdots \boxtimes \oh(d) )^{\SL_2}$ and $s = \prod_{i \leq m < j \leq n} (x_iy_j - x_j y_i) \in R_m$, then there is an isomorphism
$$R_{(s)} = \ZZ[ W_{ij}]_{W_{ij} -1} / (W_{ij} W_{kl} - W_{il} W_{kj} )$$
In particular, $W// \GG_m \subseteq (\PP^1)^m //_L \SL_2$ is isomorphic to the space of $(m-1) \times (m-1)$ matrices of rank at most 1 where each entry differs from 1 with the point $(0, \cdots, 0, \infty, \cdots, \infty)$ corresponding the zero matrix. \epf
\end{prop}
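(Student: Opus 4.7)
The strategy is to apply the slicing technique developed in Section~\ref{section-slicing} to the $\SL_2$-invariant open affine $U\subseteq (\PP^1)^n$ defined as the non-vanishing locus of $s$. Since $R_{(s)} = \Gamma(U,\oh_U)^{\SL_2}$, it suffices to compute this ring, and the plan is to exhibit a particularly simple slice of the groupoid $\SL_2\times U \rightrightarrows U$ whose invariants are transparent.

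The slice to take is $W = \{P_1 = 0,\, P_n = \infty\} \hookrightarrow U$, i.e.\ the closed subscheme defined by the vanishing of $x_1$ and $y_n$ (which is a closed condition in $U$ because on $U$ the points $P_1$ and $P_n$ are required to be distinct, so $(x_1,y_1)$ and $(x_n,y_n)$ can be normalized independently). Concretely, $W \cong \Spec \ZZ[x_i,y_j]_{\prod(x_iy_j-1)}$ with the free coordinates being $y_1$, $x_n$, and the pairs $(x_i,y_i)$ for $1<i<n$. Slicing the groupoid $\SL_2\times U\rightrightarrows U$ by $W$, I would compute $R|_W \subseteq \SL_2\times U$ as the vanishing of $x_1$, $\sigma^*x_1 = ax_1+by_1$, $y_n$, $\sigma^*y_n = cx_n+dy_n$. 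Since on $R|_W$ we have $x_1 = y_n = 0$ and both $y_1, x_n$ are units, the equations force $b = c = 0$, so $R|_W = \GG_m\times W$, and a direct check shows the groupoid structure is exactly the one induced by the torus $\GG_m = \{\mathrm{diag}(t,t^{-1})\}\subseteq\SL_2$ acting by $x_i\mapsto tx_i$, $y_j\mapsto t^{-1}y_j$.

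The main technical step is to verify the faithful flatness and codimension hypothesis so that the slicing proposition applies. For flatness of $\sigma\colon \SL_2\times W \to U$, I would apply Proposition~\ref{flat_prop}: since $W$ is cut out by the two sections $x_1$ and $y_n$ of appropriate line bundles, the criterion reduces to checking that $\sigma^*x_1 = ax_1+by_1$ and $\sigma^*y_n = cx_n+dy_n$ do not vanish on any associated point of $\{g\}\times U$ for $g\in\SL_2$; this is transparent since these expressions are nonzero sections on the integral scheme $\SL_2\times U$. Surjectivity follows because any $(P_1,\ldots,P_n)\in U$ can be moved by $\SL_2$ into the position $P_1 = 0,P_n = \infty$ (one uses that $P_1\neq P_n$ on $U$). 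Hence $\sigma\colon \SL_2\times W\to U$ is faithfully flat, so by the slicing proposition $[U/\SL_2] \cong [W/\GG_m]$ and $\Gamma(U,\oh_U)^{\SL_2} \cong \Gamma(W,\oh_W)^{\GG_m}$.

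Finally, I would compute the $\GG_m$-invariants of $W$ directly. Setting $W_{ij} = x_iy_j$ for $1\le i \le m<j\le n$, these are manifestly invariant, and since monomials in $x_i,y_j$ are $\GG_m$-invariant iff they contain equally many $x$'s and $y$'s, the invariant subring is generated by the $W_{ij}$. The Pl\"ucker-type relations $W_{ij}W_{kl} = W_{il}W_{kj}$ hold tautologically, and a standard argument (or a dimension count / the classical first fundamental theorem for $\GG_m$) shows these are the only relations; the localization $W_{ij}-1$ arises from inverting the differences $x_iy_j - x_jy_i$ on $W$, which on the slice become $W_{ij}-1$. This identifies $R_{(s)}$ with the claimed ring, and the geometric description as the affine space of rank $\le 1$ matrices with the origin corresponding to the strictly semistable point $(0,\ldots,0,\infty,\ldots,\infty)$ follows by inspection. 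The only delicate point in this plan is the flatness/codimension verification, but Proposition~\ref{flat_prop} reduces it to the trivial observation above.
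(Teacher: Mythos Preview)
Your proposal is correct and follows essentially the same route as the paper: slice $U$ by $W=\{P_1=0,\,P_n=\infty\}$, identify $R|_W$ with $\GG_m\times W$ via the computation of $\sigma^*x_1,\sigma^*y_n$, invoke Proposition~\ref{flat_prop} for faithful flatness, and read off the $\GG_m$-invariants as the $W_{ij}=x_iy_j$ with the rank-one relations. One small correction: in the flatness step the fibers on which you must check non-vanishing are $p_2^{-1}(u)=\SL_2\times\{u\}$ (equivalently $\sigma^{-1}(u)\cong\SL_2$), not $\{g\}\times U$; the check is still immediate since $\sigma^*x_1$ and $\sigma^*y_n$ restrict to nonzero linear forms on each such fiber.
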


\begin{remark}
While this gives a satisfying Zariski-local description of the singularity, unfortunately the open sets $U \subseteq (\PP^1)^{n,\ss}$ defined by varying the choice of a partition $\{1, \ldots n\}$ into two equal length subsets do not cover the semistable locus.  For example, if $n=6$, the point $(0,0,1,1,\infty, \infty)$ is not in any such open subset.   
\end{remark}

\subsection{Second description}  The $\SL_2$-invariant open affine subscheme
$$U = \{ (P_1, \ldots, P_n) \, | \, P_1 \neq P_2, \ldots, P_{n-1} \neq P_n\}$$
is the non-vanishing locus of the section
$$s = \prod_{ 1 \le i \le m} x_{2i-1} y_{2i} - x_{2i}y_{2i-1} \in \Gamma( (\PP^1)^n, \oh(1) \boxtimes \cdots \boxtimes \oh(1))^{\SL_2}$$
This is the invariant section corresponding to the graph $\Delta = 12 \cdot 34 \cdot \cdots \cdot (n-1)n$.

Let $W \hookarr U$ be the closed subscheme defined by $P_1 = 0, P_2 = \infty$.  The composition $W \hookarr U \arr [U/\SL_2]$ is faithfully flat giving the the quotient stack representation $[U/\SL_2] \cong [W / \GG_m]$. 

Consider the Pl\"ucker embedding
$$\PP^1 \times \PP^1 \arr \PP^3, \quad ([x_i,y_i], [x_{i+1}, y_{i+1}]) \mapsto [x_i x_{i+1} \, , \, x_i y_{i+1} \, , \, y_i x_{i+1} \, , \, y_i y_{i+1}]$$
and set $\tilde A_i = x_i x_{i+1}, \tilde B_i = x_i y_{i+1}, \tilde C_i = y_i x_{i+1}, \tilde D_i = y_i y_{i+1}$.  By inverting $\tilde B_i - \tilde C_i$, we have
$$(\PP^1 \times \PP^1) \setminus \{P_i = P_{i+1} \} = \Spec \ZZ[A_i, B_i, C_i, D_i] / (A_i D_i - B_i C_i, B_i - C_i - 1)$$
 where $A_i = \tilde A_i / (\tilde B_i - \tilde C_i), \ldots, D_i = \tilde D_i / (\tilde B_i - \tilde C_i)$.  This gives the description
$$W = \Spec \ZZ[A_i, B_i, C_i,  D_i \, | \, i=3,5, \cdots n-1] /  A_i  D_i -  B_i  C_i,  B_i -  C_i - 1)$$
such that $\GG_m$ acts via $ A_i \mapsto t^2  A_i,  B_i \mapsto  B_i,   C_i \mapsto  C_i,$ and $ D_i \mapsto t^{-2}  D_i$.

\begin{prop} \label{second_desc}
If $n = 2m$, $R = \bigoplus_d \Gamma( (\PP^1)^n, \oh(d) \boxtimes \cdots \boxtimes \oh(d) )^{\SL_2}$ and $s = \prod_{ 1 \le i \le m} x_{2i-1} y_{2i} - x_{2i}y_{2i-1} \in R_1$, then there is an isomorphism
$$R_{(s)} =  \ZZ[ B_i,  C_i,  F_{ij} \, | \, i,j = 3, 5, \cdots, n-1]/ ( F_{ii} -  B_i  C_i,  B_i -  C_i - 1,  F_{ij}  F_{kl} -  F_{il}  F_{kj})$$
where $F_{ij}$ is the invariant $ A_i  D_j$.  \epf
\end{prop}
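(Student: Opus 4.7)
The strategy mirrors the proof of the first description, Proposition \ref{first_desc}: slice the groupoid $\SL_2 \times U \rrarrows U$ by the closed subscheme $W \hookarr U$ and then compute the invariants of the resulting $\GG_m$-action on $W$. The first step is to verify that the composition $\SL_2 \times W \arr U$ is flat, using Proposition \ref{flat_prop} exactly as in the first description: the subscheme $W$ is cut out of $U$ by the sections $x_1$ and $y_2$, whose pullbacks under the action morphism are of the form $a x_1 + b y_1$ and $c x_2 + d y_2$, and these do not vanish at any associated point of the generic fiber of $s: \SL_2 \times U \to U$. Surjectivity is clear since any configuration in $U$ can be translated by an element of $\SL_2$ to one satisfying $P_1 = 0$ and $P_2 = \infty$. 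The stabilizer of $(0, \infty) \in \PP^1 \times \PP^1$ inside $\SL_2$ is the diagonal torus $\GG_m$, and a direct reading of the equations defining $R|_W \subseteq \SL_2 \times U$ shows that $R|_W \cong \GG_m \times W$, so that $[U/\SL_2] \cong [W/\GG_m]$ and hence $R_{(s)} = \Gamma(W, \oh_W)^{\GG_m}$.

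The second step is to describe $W$ and the $\GG_m$-action on it explicitly. After fixing $P_1 = 0$ and $P_2 = \infty$, the scheme $W$ is the product $\prod_{i = 3, 5, \ldots, n-1} \bigl( (\PP^1 \times \PP^1) \setminus \Delta_i \bigr)$ over the pairs $(P_i, P_{i+1})$, where $\Delta_i$ is the diagonal. Pulling back through the Plücker embedding $\PP^1 \times \PP^1 \hookarr \PP^3$ and normalizing $\tilde B_i - \tilde C_i$ to $1$ yields the presentation $W = \Spec \ZZ[A_i, B_i, C_i, D_i]/(A_i D_i - B_i C_i, \, B_i - C_i - 1)$ given in the text. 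The diagonal torus acts on $[x:y] \in \PP^1$ by $[x:y] \mapsto [tx : t^{-1} y]$, from which one reads off the weights $2, 0, 0, -2$ on $A_i, B_i, C_i, D_i$ respectively.

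The final step is the invariant-theoretic calculation. For the polynomial subring $\ZZ[A_i, D_j \, | \, i, j = 3, 5, \ldots, n-1]$ with weights $(2, \ldots, 2, -2, \ldots, -2)$, the $\GG_m$-invariants form the classical determinantal ring generated by the products $F_{ij} = A_i D_j$ modulo the $2 \times 2$ Plücker relations $F_{ij} F_{kl} - F_{il} F_{kj}$. Adjoining the already-invariant coordinates $B_i, C_i$ is free, and then imposing the relations $A_i D_i = B_i C_i$ (which become $F_{ii} - B_i C_i$) and $B_i - C_i - 1$ recovers the presentation in the statement.

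The only subtle step is the first one, namely verifying the flatness input needed to obtain the stack isomorphism $[U/\SL_2] \cong [W/\GG_m]$; the remainder is bookkeeping around a standard determinantal invariant computation.
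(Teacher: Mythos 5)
Your proposal is correct and follows essentially the same route as the paper: slicing by $W=\{P_1=0,\,P_2=\infty\}$ to obtain $[U/\SL_2]\cong[W/\GG_m]$, presenting $W$ via the Pl\"ucker coordinates $A_i,B_i,C_i,D_i$ with $\GG_m$-weights $2,0,0,-2$, and reading off the invariants as the determinantal ring in $F_{ij}=A_iD_j$ together with $B_i,C_i$ and the relations $F_{ii}=B_iC_i$, $B_i-C_i=1$. The flatness check and the reduction to the standard torus-invariant computation are exactly the steps the paper relies on (and in fact spells out in less detail than you do).
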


\subsection{Scheme-theoretic description of $(\PP^1)^n // \SL_2$}
The first main theorem of \cite{hmsv} in the case of the symmetric case states that over $\ZZ[1/3]$ for $n \neq 6$ the $(\PP^1)^n // \SL_2$ is scheme-theoretically cut out by the sign, Pl\"ucker, and simple binomial relations.  Their proof uses two ingredients: (1) an induction argument to prove the result away from the strictly semistable points and (2) the explicit description of Proposition \ref{first_desc} (\cite[Lemma 4.3]{hmsv}) to prove the result in a neighborhood around a strictly semistable point.

It is conceivable that Proposition \ref{second_desc} can give a direct proof of this theorem since such Zariski opens cover the GIT quotient.  In fact, let $J$ be the ideal generated by the sign, Pl\"ucker, and simple binomial relations and $S=\ZZ[X_{\Delta}]/J$ so that there is a surjective morphism $\pi: S \mapsonto R = \ZZ[X_{\Delta}]/I$ where $I$ is the ideal of relations.  Let $f= \prod_{ 1 \le i \le m} x_{2i-1} y_{2i} - x_{2i}y_{2i-1}$.  One must show that for $n \neq 6$, $\pi_{(f)}: S_{(f)} \arr R_{(f)}$.  The second description above gives an isomorphism $\gamma: R_{(f)} \arr \Gamma([W/\GG_m])$ so that there is a diagram
$$\xymatrix{
(\ZZ[X_{\Gamma}] / J)_{(f)} = S_{(f)} \ar[r]^{\pi_{(f)}} \ar[rd]^{\alpha}	&  R_{(f)} = (\ZZ[X_{\Gamma}]/I)_{(f)} \ar[d]^{\gamma} \\
								& \Gamma([W/\GG_m])  \ar@{-->}@/^1.5pc/[ul]_{\beta}
}$$
One can construct explicitly the inverse $\gamma^{-1}: \Gamma([W/\GG_m]) \arr R_{(f)}$.  It remains only to check that $\gamma^{-1}$ lifts to a morphism $\beta: \Gamma([W/\GG_m]) \arr S_{(f)}$ such that $\beta \circ \alpha = \id$.  Unfortunately, we have not been able to show this.

We stress though that Howard, Millson, Snowden and Vakil can prove the much stronger result that the ideal of relations is generated by the sign, Pl\"ucker, and simple binomial relations for $n \neq 6$.  However, it is possible that our methods could be applicable to configurations of points in $\PP^m$ for $m > 1$ where very little is currently known.


\section{The Kontsevich moduli space of stable maps $M_0(\PP^1,2)$}  \label{section-kontsevich}

We recall from \cite{kontsevich}, \cite{fulton-pandharipande} and \cite{abramovich-oort} that the moduli stack of maps from smooth curves $\cM_0(\PP^1, 2)$ over $\Spec \ZZ$ is the category fibered in groupoids where an object over a scheme $T$ is a pair $(p: C \arr T, f: C \arr \PP^1)$ where $C \arr T$ is a smooth, proper morphism whose geometric fibers are connected, genus 0 curves and $f: C \arr \PP^1$ is a morphism such that for any geometric point $t \in T$, $f_t: C_t \arr \PP^1$ is non-constant.  A morphism $(p: C \arr T, f: C \arr \PP^1) \arr (p': C' \arr T', f: C' \arr \PP^1)$ in $\cM_0(\PP^1, 2)(T)$ is a cartesian square
$$\xymatrix{
C \ar[r]^{\alpha}	 \ar[d]		& C' \ar[d] \\
T \ar[r]^{\beta}				& T'
}$$
such that $f = f' \circ \alpha$.  It is well known that $\cM_0(\PP^1, 2)$ is an Artin stack with finite inertia. 

 Let $V=\{(A_1x^2 + B_1xy+C_1y^2, A_2 x^2, B_2 xy + C_2 y^2) \}^{\dual}$ be the dual of the free $\ZZ$-module consisting of two sections $s_1,s_2 \in \Gamma(\PP^1, \oh(2))$.  Let $X=\PP V$ and $U$ be the open subscheme of $X$ consisting of basepoint free sections.  The locus $\Delta = X \setminus U$ consisting of pairs of sections with basepoints is given by the vanishing of a degree 4 homogeneous function $\Lambda$ (see Equation \ref{invariants}).
 
$X$ has a natural action of $G=PGL_2$ and $\oh(1)$ has a unique $PGL_2$-linearization which on global sections is given by 
$$\begin{aligned}
\Gamma(X, \oh(1)) 	& \rightarrow \Gamma(G \times X, \oh \boxtimes \oh(1))\\
A_i				& \mapsto \frac{1}{ad-bc} (A_i a^2 + B_i ac + C_i c^2) \\
B_i				& \mapsto \frac{1}{ad-bc} (2A_i ab + B_i(ad+bc) + 2C_icd)  \quad ( = B_i  \, \text { in characteristic $2$)} \\
C_i 				& \mapsto \frac{1}{ad-bc} (A_i b^2 + B_i bd + C_i d^2),
\end{aligned}$$
where we have given $PGL_2$ the projective coordinates $a,b,c,d$.

\begin{prop} There is an isomorphism of Artin stacks $\cM_0(\PP^1, 2) \cong [U / PGL_2]$ over $\Spec \ZZ$.  Over $\Spec \ZZ[1/2]$, $\cM_0(\PP^1,2)$ is a separated Deligne-Mumford stack.  In characteristic 2, the stabilizer of the totally ramified degree 2 cover $\PP^1 \arr \PP^1, [x,y] \mapsto [x^2,y^2]$ is the group scheme $V(a^2-d^2,b^2,c^2) \subseteq \PGL_2$ which is not reduced and not linearly reductive.  In particular, $\cM_0(\PP^1,2)$ is a non-tame Artin stack over $\FF_2$ with finite inertia.
\end{prop}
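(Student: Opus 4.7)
The plan is to identify $\cM_0(\PP^1,2)$ with the quotient stack $[U/\PGL_2]$, then compute the scheme-theoretic stabilizer of the totally ramified cover in characteristic $2$ using the explicit action formulas displayed in the excerpt, and finally invoke the classification of linearly reductive finite group schemes to conclude non-tameness.

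For the identification $\cM_0(\PP^1,2) \cong [U/\PGL_2]$, I construct a $\PGL_2$-equivariant morphism $U \to \cM_0(\PP^1,2)$ sending a class $(s_1,s_2)$ of basepoint-free sections of $\oh_{\PP^1_T}(2)$, modulo common rescaling, to the degree $2$ morphism $[s_1:s_2] : \PP^1_T \to \PP^1$ from the trivial $\PP^1$-bundle. Any genus zero smooth proper curve $C \to T$ is a relative Brauer--Severi curve, hence fppf-locally isomorphic to $\PP^1_T$; under such a trivialization a degree $2$ morphism $f$ corresponds via $f^{\ast}\oh(1) \cong \oh_{\PP^1_T}(2)$ to a basepoint-free pair of sections up to common scaling (i.e.\ a $T$-point of $U$), and the ambiguity of the trivialization is precisely $\Aut(\PP^1_T/T) = \PGL_{2,T}$.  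Hence $U \to \cM_0(\PP^1,2)$ is a $\PGL_2$-torsor, giving the claimed isomorphism.

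For the stabilizer of $f = [x^2, y^2]$, I substitute $(A_1,B_1,C_1,A_2,B_2,C_2) = (1,0,0,0,0,1)$ into the action formulas displayed in the excerpt and require $[a:b:c:d] \in \PGL_2$ to rescale both triples by a common scalar $\lambda$.  Over $\FF_2$ the cross terms $2ab$ and $2cd$ vanish automatically, and the remaining equations collapse to $b^2=0$, $c^2=0$, and $a^2=d^2=\lambda$, cutting out the closed subgroup scheme $V(a^2-d^2,b^2,c^2)\subseteq \PGL_2$.  Dehomogenizing at $[1:0:0:1]$ and writing $\varepsilon = 1-d$ (so that $a^2-d^2 = \varepsilon^2$ in characteristic $2$), the local ring at the identity is $\FF_2[b,c,\varepsilon]/(b^2,c^2,\varepsilon^2)$, which is connected, finite of length $8$, of embedding dimension $3$ — in particular non-reduced.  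The unipotent subgroup $\{[1:b:0:1] : b^2=0\}$ is a closed copy of $\alpha_2$ inside the stabilizer; since subgroup schemes of multiplicative-type group schemes are of multiplicative type, and $\alpha_2 \ne \mu_2$ in characteristic $2$, the connected group scheme $V(a^2-d^2,b^2,c^2)$ cannot be of multiplicative type, hence by the Nagata/Abramovich--Olsson--Vistoli classification it is not linearly reductive.

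For the remaining claims: over $\Spec \ZZ[1/2]$ every degree $2$ morphism $\PP^1_k \to \PP^1_k$ is separable, so its automorphism group acts faithfully on $\PP^1$, preserves the two branch points, and is therefore reduced and \'etale of order dividing $2$.  This gives the Deligne--Mumford property, and separatedness is inherited from the open immersion $\cM_0(\PP^1,2) \hookrightarrow \bar\cM_0(\PP^1,2)$ into the proper Kontsevich stack.  All stabilizers are zero-dimensional closed subgroup schemes of $\PGL_2$, so inertia is finite globally over $\Spec \ZZ$, and the non-linearly-reductive stabilizer exhibited above rules out tameness over $\FF_2$.  The main obstacle is the scheme-theoretic stabilizer calculation at the totally ramified cover — carrying the common scalar $\lambda$ correctly through both sections and tracking which characteristic $2$ cross terms vanish — together with the correct identification of $\alpha_2$ inside the stabilizer in order to invoke the classification theorem.
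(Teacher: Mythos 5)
Your proposal is correct and follows essentially the same route as the paper: identify $\cM_0(\PP^1,2)$ with $[U/\PGL_2]$ by locally trivializing the genus-$0$ family and matching isomorphisms of degree-$2$ maps with the $\PGL_2$-action on pairs of sections modulo common scaling (the paper phrases this via stackification of two equivalent prestacks rather than the torsor criterion, but the content is identical), then read off the stabilizer $V(a^2-d^2,b^2,c^2)$ from the displayed action formulas and exhibit the copy of $\alpha_2$ given by $c=0$, $a=d=1$ to rule out linear reductivity. The remaining assertions (separatedness, finite inertia, the Deligne--Mumford property away from $2$) are treated at the same level of detail as in the paper, which simply cites them as well known.
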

\begin{proof}
Let $\cA$ be the prestack over the big \'etale site of affine schemes whose objects over an affine scheme $\Spec A$ are surjective $A$-module homomorphisms $\alpha: A \tensor_{\ZZ} V^{\dual} \mapsonto A$ such that for each prime $p \subseteq A$ the $k(p)$-vectors $(\alpha_p(A_1), \alpha_p(B_1), \alpha_p(C_1))$ and  $(\alpha_p(A_2), \alpha_p(B_2), \alpha_p(C_2))$ are linearly independent where $\alpha_p = \alpha \tensor k(p)$ (i.e. the induced morphism $\Spec A \arr \PP V$ factors through $U$).  The group of morphisms $\Mor(A_1 \tensor_{\ZZ} V^{\dual} \stackrel{\alpha_1}{\arr} A_1,A_2 \tensor_{\ZZ} V^{\dual} \stackrel{\alpha_1}{\arr} A_2)$ over $f: \Spec A_1 \arr \Spec A_2$ is the subgroup of elements $g \in \PGL_2(A_1)$ such that there exists $a \in A_1^*$ inducing a commutative diagram
$$\xymatrix{
 A_1 \tensor_{\ZZ} V^{\dual} \ar[r]^{\alpha_1} \ar[d]^g	& A_1 \ar[d]^a\\
 A_1 \tensor_{\ZZ} V^{\dual} \ar[r]^{\alpha_2 \tensor_{A_2} {A_1}}	& A_1
 }$$

 Let $\cB$ be the prestack whose objects over $\Spec A$ are morphisms $f: \PP^1_{A} \arr \PP^1_{A}$ such that $f^*\oh(1) \cong \oh(2)$.  The morphisms $\Mor(f_1,f_2)$ over $\Spec A_1 \arr \Spec A_2$ is the set of elements $g \in \PGL_2(A_1)$ such that there is a commutative diagram
 $$\xymatrix{
 \PP^1_{A_1} \ar[r]^{f_1} \ar[d]^g	& \PP^1_{A_1} \\
 \PP^1_{A_1} \ar[ru]_{f_2 \tensor A_1}
 }$$
 
 The stackification of $\cA$ and $\cB$ is isomorphic to $[U/\PGL_2]$ and $\cM_0(\PP^1,2)$, respectively.  There is an equivalence of categories $\cA \arr \cB$  sending $\alpha:A \tensor_{\ZZ} V^{\dual} \mapsonto A$ to the morphism $f:\PP^1_{A} \arr \PP^1_{A}$ determined by the sections $(\alpha(A_1) X^2 + \alpha(B_1)XY + \alpha(C_1) Y^2, (\alpha(A_2) X^2 + \alpha(B_2)XY + \alpha(C_2) Y^2) \in \Gamma(\PP^1_{A},\oh(2))$ 
Stackification therefore induces an isomorphism of stacks $[U/\PGL_2] \arr \cM_0(\PP^1,2)$.

For the final statements, it is well-known that $\cM_0(\PP^1,2)$ is separated.  It is easy to see that the stabilizer $G_p$ of the point $p \in \cM_0(\PP^1,2)(\FF_2)$ is defined by $V(a^2-d^2,b^2,c^2) \subseteq \PGL_2(\FF_2)$ which is clearly non-reduced.  Since the subgroup scheme of $G_p$ defined by $c=0, a=d=1$ is isomorphic to the non-linearly reductive group scheme $\alpha_2$, it follows that $G_p$ is not linearly reductive.
 \end{proof}

Over $\Spec \ZZ$, by the Keel-Mori theorem (\cite{keel-mori}), there exists a coarse moduli space $\cM_0(\PP^1, 2) \arr M_0(\PP^1, 2)$.  The scheme $M_0(\PP^1, 2)$ has the GIT compactification:
$$\bar M_0^{\GIT}(\PP^1, 2) = \PP V //_{\oh(1)} \PGL_2 := \Proj S \quad \text{where } \quad S =\bigoplus_{k \ge 0} \Gamma(\PP V, \oh(k))^{\PGL_2}$$
Then $M_0(\PP^1,2) \subseteq \bar M_0^{\GIT}(\PP^1,2)$ is open locus where $\Lambda$ doesn't vanish, where $\Lambda$ is the invariant defined in (\ref{invariants}) whose vanishing determines the basepoint locus.

\subsection{Proof of Theorem \ref{main_theorem} over $\Spec \ZZ$}

We prove that over $\ZZ$, the ring of invariants $S \cong \ZZ[\Delta_1, \Delta_2, \Gamma, \Lambda]/(4\Lambda+\Delta_1 \Delta_2-\Gamma^2)$:

\begin{proof}
Let $W \hookarr X = \PP V$ be the locally closed subscheme defined by 
$$W = \{A_1=C_2 = 0 \} \setminus \big( \{B_1=C_1 = 0\} \cup \{A_2 = B_2 = 0 \} \big)$$
We claim that $W \arr [\PP V / \PGL_2]$ is flat such that its image $[W/R|_{W}] \subseteq [\PP(V) / \PGL_2]$ has complement consisting of pairs of sections $(s_1,s_2)$ where either $s_1=0$ or $s_2=0$.  In particular, its image has complement of codimension at least 2.  To prove this, we will slice first by $A_1=0$ and then by $C_2=0$ using the local criterion for flatness to verify that both slices produce fppf groupoids.   

First, let $W_1 = \{A_1 = 0 \} \setminus \{B_1=C_1 = 0\}$ and consider the diagram
$$\xymatrix{
R|_{W_1} \ar[r] \ar[d]			& G \times W_1 \ar[r] \ar[d]					& W_1 \ar[d] \\
\sigma^{-1}(W_1) \ar[r] \ar[d]	& G \times X \ar[r]^{p_2} \ar[d]^\sigma		& X \ar[d] \\
W_1 \ar[r]					& X \ar[r]								& [X/R]
}$$
 By Proposition \ref{flat_prop}, to check that $\sigma^{-1}(W_1) \stackrel{p_2}{\arr} \PP V$ is flat, we need to check that $\sigma^* A_1 = \frac{1}{ad-bc} (A_1 a^2 + B_1 ac + C_1 c^2)$ does not vanish at any associated point in $p_2^{-1}(w) = \PGL_2 \times_{\ZZ} k(w)$ for $w \in W_1 \subseteq \PP V$.  This statement is true as $w \notin V(B_1, C_1)$.   It follows that $R|_{W_1} \arr W_1$ is an fppf groupoid such that $[W_1 / R|_{W_1}]$ is an open substack of $[\PP V / \PGL_2]$. 

Consider the groupoid $R|_W \rrarrows W$.
We need to check that $\sigma^* C_2$ does not vanish at any associated point in $V(A_1,\sigma^* A_1) \subseteq \PGL_2 \times_{\ZZ} k(w)$ for $w \in W $.  Since $w \notin V(B_1,C_1)$, we see that $V(A_1,\sigma^* A_1) = V(A_1, c(B_1a + C_1 c))$ has associated points $(c)$ and $(B_1 a + C_1 c)$.  Since $w \notin V(A_2,B_2)$, $\sigma^*C_2 = \frac{1}{ad-bc} (A_2 b^2 + B_2 bd + C_2d^2)$ does not vanish at either associated point.  
It follows that $W \arr [\PP V / \PGL_2]$ is flat.  

The orbit $\PGL_2 \cdot W $ consists of pairs of sections $(s_1,s_2)$ where both $s_1$ and $s_2$ are non-zero.   Clearly $[\PP V / \PGL_2 ] \setminus [W / R|_W]$ has codimension 3.  Therefore, we may compute the invariants as the equalizer of:
$$\xymatrix{
 \bigoplus_{k=0}^{\infty} \Gamma(W,\oh(k)) \ar@<0.5ex>[r]^{\sigma} \ar@<-0.5ex>[r]_{p_2}  \ar@2{-}[d]
 	 & \bigoplus_{k=0}^{\infty} \Gamma(R|_W, \oh(k))  \ar@2{-}[d]\\
 k[B_1,C_1, A_2, B_2]  \ar@<0.5ex>[r] \ar@<-0.5ex>[r] 
 	& \Gamma(\PGL_2,\oh_{\PGL_2}) [B_1,C_1,A_2,B_2] / (c(B_1a+C_1c), b(A_2b+B_2d)) 
}$$
$$\begin{array}{ll}
	B_1 & \stackrel{\sigma^*} {\mapsto} \frac{1}{ad-bc}(B_1(ad+bc)+2C_1cd) \\
	C_1 &  \stackrel{\sigma^*} {\mapsto} \frac{1}{ad-bc}(B_1bd+C_1d^2) \\
	A_2 &  \stackrel{\sigma^*} {\mapsto} \frac{1}{ad-bc}(A_2a^2+B_2ac) \\
	B_2 &  \stackrel{\sigma^*} {\mapsto} \frac{1}{ad-bc}(2A_2ab + B_2(ad+bc)),
\end{array}$$

This computation still seems unmanageable but we will restrict the computation to each of the four components of $R|_W$:  $R_1 = V(b,c)$, $R_2 = V(c, A_2b+B_2d)$, $R_3 = V(b, B_1a+C_1c)$, and $R_4 = V(B_1a+C_1c,A_2b+B_2d)$.   Recall that if $D$ is a ring with $c,d \in D$ and $d$ a non-zero divisor in $D/(c)$ then $(c) \cap (d) = (cd)$ and $D/(cd) \hookrightarrow D/(c) \times D/(d)$ is injective.  Therefore, we have inclusions
$$\bigoplus \Gamma(R|_W, \oh(k)) \hookarr \bigoplus \Gamma(R_1, \oh(k)) \times \bigoplus \Gamma(R_2, \oh(k)) \times \bigoplus \Gamma(R_3, \oh(k)) \times \bigoplus \Gamma(R_3, \oh(k))$$
If $S_i$ is the equalizer of
$\oplus_{k}  \Gamma(W,\oh(k)) \stackrel{\sigma,p_2}{\rrarrows} \oplus_k \Gamma(R_i, \oh(k))$, then $S = \cap S_i$ is the ring of invariants.  We can now compute:
{\small
$$
\begin{array}{ll}
\begin{array}{l} i=1 \\ (b=c=0) \end{array}	& \qquad
\begin{array}{rcl}
		 \ZZ[B_1,C_1,A_2,B_2] & \stackrel{\sigma, p_2}{\rrarrows} & (\ZZ[a,d]_{ad})_0 \tensor \ZZ[B_1,C_1,A_2,B_2]\\
		 B_1				& \stackrel{\sigma}{\mapsto} & B_1 \\
		 C_1				& \stackrel{\sigma}{\mapsto} & \frac{d}{a}C_1 \\
		 A_2				& \stackrel{\sigma}{\mapsto} & \frac{a}{d}A_2 \\
		 B_2				& \stackrel{\sigma}{\mapsto} & B_2 \\
\end{array} \\
		& \implies \fbox{$S_1 = \ZZ[B_1,B_2,C_1 A_2]$}\\
\begin{array}{l} i=4 \\ (A_2b+B_2d=0, \\B_1a+C_1c=0) \end{array}	& \qquad
\begin{array}{rcl}
		 \ZZ[B_1,C_1,A_2,B_2] & \stackrel{\sigma, p_2}{\rrarrows} & (\ZZ[a,b,c,d]_{ad-bc})_0 \tensor \ZZ[B_1,C_1,A_2,B_2]/ \\
		 	&&(A_2b+B_2d=0, B_1a+C_1c=0) \\
		 B_1				& \stackrel{\sigma}{\mapsto} & -B_1\\
		 C_1				& \stackrel{\sigma}{\mapsto} &  \frac{1}{ad-bc}(B_1bd+C_1d^2)  \\
		 A_2				& \stackrel{\sigma}{\mapsto} & \frac{1}{ad-bc}(A_2a^2+B_2ac) \\
		 B_2				& \stackrel{\sigma}{\mapsto} & -B_2 \\
		 C_1A_2			& \stackrel{\sigma}{\mapsto} & \frac{1}{(ad-bc)^2} (B_1A_2a^2bcd+C_1A_2a^2d^2+B_1B_2abcd+C_1B_2acd^2) \\
		 		&	& = C_1A_2\\
\end{array} \\
		& \implies \fbox{$S_1 \cap S_4 = \ZZ[B_1^2,B_2^2, B_1B_2,C_1A_2] / ( (B_1^2)(B_2^2) - (B_1 B_2)^2 )$}
\end{array}
$$\newline	
$$\begin{array}{ll}	
\begin{array}{l} i=2 \\ (c=A_2b+B_2d=0) \end{array}		& \qquad
\begin{array}{rcl}
		 \ZZ[B_1,C_1,A_2,B_2] & \stackrel{\sigma, p_2}{\rrarrows} & (\ZZ[a,b,d]_{ad})_0 \tensor \ZZ[B_1,C_1,A_2,B_2]/ (A_2b+B_2d)\\
		 B_1^2			& \stackrel{\sigma}{\mapsto} & B_1 \\
		 C_1				& \stackrel{\sigma}{\mapsto} & \frac{b}{a}B_1 + \frac{d}{a}C_1 \\
		 A_2				& \stackrel{\sigma}{\mapsto} & \frac{a}{d}A_2 \\
		 B_2				& \stackrel{\sigma}{\mapsto} & 2\frac{b}{d}A_2+B_2 = -B_2 \\
		 B_1 B_2 		& \stackrel{\sigma}{\mapsto} & -B_1 B_2 \\
		 C_1A_2				& \stackrel{\sigma}{\mapsto} & \frac{b}{d}B_1A_2+C_1A_2 = C_1A_2 - B_1B_2 \\
		 C_1A_2	-B_1B_2			& \stackrel{\sigma}{\mapsto} & C_1 A_2 - B_1 B_2\\
\end{array} \\
		& \implies \fbox{$S_1 \cap S_2 \cap S_4 = \ZZ<B_1^2,B_2^2,2C_1A_2-B_1B_2,C_1A_2(C_1A_2 - B_1B_2)>$}\\
\begin{array}{l} i=3 \\ (b=B_1a+C_1c=0) \end{array}	& \qquad
\begin{array}{rcl}
		 \ZZ[B_1,C_1,A_2,B_2] & \stackrel{\sigma, p_2}{\rrarrows} & (\ZZ[a,c,d]_{ad})_0 \tensor \ZZ[B_1,C_1,A_2,B_2]/(B_1a+C_1c)\\
		 B_1				& \stackrel{\sigma}{\mapsto} & B_1+2\frac{c}{a}C_1 = -B_1 \\
		 C_1				& \stackrel{\sigma}{\mapsto} & \frac{d}{a}C_1 \\
		 A_2				& \stackrel{\sigma}{\mapsto} & \frac{a}{d}A_2+\frac{c}{d}B_2 \\
		 B_2				& \stackrel{\sigma}{\mapsto} & B_2 \\
 		 B_1 B_2 		& \stackrel{\sigma}{\mapsto} & -B_1 B_2 \\
		 C_1A_2				& \stackrel{\sigma}{\mapsto} & \frac{c}{a}B_2C_1+C_1A_2 = C_1A_2 - B_1B_2 \\
		 C_1A_2	-B_1B_2			& \stackrel{\sigma}{\mapsto} & C_1 A_2 - B_1 B_2\\
\end{array} \\
		& \implies \fbox{$S = \ZZ<B_1^2,B_2^2,2C_1A_2-B_1B_2,C_1A_2(C_1A_2 - B_1B_2) >$}\\
\end{array}$$
}

The calculation above of the invariants for $S_1$ and $S_1 \cap S_2$ is obvious.  It is also easy to see that $S_1 \cap S_2 \cap S_4$ is generated by $B_1^2$, $B_2^2$, $2C_1A_2 - B_1 B_2$ and $C_1A_2(C_1A_2-B_1B_2)$.  For instance, $S_1 \cap S_2 \cap S_4$ is the $\ZZ_2$-invariants of the ring $\ZZ[A,B,C,D]/((C-D)^2 - AB)$ where the action is trivial on $A$ and $B$ but swaps $C$ and $D$ where $A=B_1^2$, $B= B_2^2$, $C = A_1C_2$ and $D = A_1C_2 - B_1B_2$.  It follows that $S_1 \cap S_2 \cap S_4 = \ZZ[A,B,C+D, CD] / ((C-D)^2 - AB)$.  Since $(C-D)^2 - AB = (C+D)^2 - 4CD - AB$, we see that the ideal of relations is generated by $(2C_1A_2-B_1B_2)^2  - 4 ( C_1 A_2 (C_1A_2 - B_1B_2))  -  (B_1)^2 (B_2)^2$.  It follows that
$$\begin{aligned} S  &= \ZZ[B_1^2, B_2^2, 2C_1A_2 - B_1 B_2, C_1A_2(C_1A_2-B_1B_2)] /  \\
& ((2C_1A_2-B_1B_2)^2  - 4 ( C_1 A_2 (C_1A_2 - B_1B_2))  -  (B_1)^2 (B_2)^2) 
\end{aligned}$$

The invariants $\Delta_1, \Delta_2, \Gamma$ and $\Lambda $ restrict to $W$ as:
$ \Delta_1|_{W} = B_1^2$,  
 $\Delta_2|_{W}	= B_2^2$,
$ \Gamma|_{W}	 = B_1B_2 - 2C_1A_2$, and
$ \Lambda|_{W} = C_1A_2(C_1A_2-B_1B_2)$
 and we have the relation
 $$	4 \Lambda|_{W} = \Gamma|_W^2 - \Delta_1|_W \Delta_2|_W$$
\end{proof}
 
 \subsection{Proof of Theorem \ref{main_theorem} over $\Spec \ZZ[1/2]$}
 Since good GIT quotients are stable under flat base change, it follows that over $\Spec \ZZ[1/2]$, the ring of invariants is
 $$\ZZ[\frac{1}{2}][\Delta_1, \Delta_2, \Gamma]$$
 
 \subsection{Proof of Theorem \ref{main_theorem} over $\Spec \FF_2$}
 
 Since $\PGL_2 \arr \Spec \ZZ$ is not linearly reductive, the ring of invariants over $\FF_2$ is not necessarily 
$$\ZZ[\Delta_1, \Delta_2, \Gamma, \Lambda]/(4\Lambda-\Gamma^2+\Delta_1 \Delta_2) \tensor \FF_2 = \FF_2[\Delta_1, \Delta_2, \Lambda]$$
and indeed there are invariants (eg. $B_1$ and $B_2$) over $\FF_2$ that do not lift to invariants over $\ZZ$. 
In characteristic 2, 
we immediately see that $B_1, B_2$ are invariants.  The locus $V(B_i)$ consists of sections $(s_1,s_2)$ where $s_i$ has a double root.  We see that $\Delta_1 = B_1^2$, $\Delta_2 = B_2^2$, $\Delta_{12} = B_1^2+B_2^2$ and $\Gamma = B_1B_2$ are clearly generated by $B_1$ and $B_2$.  We have

$$\Lambda = (A_1C_2+C_1A_2)^2 + (A_1C_2+C_1A_2) (B_1B_2) + A_1C_1B_2^2 + A_2C_2 B_1^2$$
 and the relation $4 \Lambda = \Gamma^2 - \Delta_1 \Delta_2$ which turns into the obvious relation $\Gamma^2 = \Delta_1 \Delta_2$. 
We repeat the above slicing argument in characteristic 2:
 
\begin{proof}
Let $W \hookarr  \PP V$ be the locally closed subscheme defined by 
$$W = \{A_1=C_2 = 0 \} \setminus \big( \{B_1=C_1 = 0\} \cup \{A_2 = B_2 = 0 \} \big)$$
By the same argument as above, $W \arr [\PP V / \PGL_2]$ is flat and $[W/R|_W] \subseteq [ \PP V/G]$ has complement of codimension 3.  As before, we need to compute the equalizer of

$$\xymatrix{
 \bigoplus_{k=0}^{\infty} \Gamma(W,\oh(k)) \ar@<0.5ex>[r]^{\sigma} \ar@<-0.5ex>[r]_{p_2}  \ar@2{-}[d]
 	 & \bigoplus_{k=0}^{\infty} \Gamma(R|_W, \oh(k))  \ar@2{-}[d]\\
 \FF_2[B_1,C_1, A_2, B_2]  \ar@<0.5ex>[r] \ar@<-0.5ex>[r] 
 	& \Gamma(G,\oh_G) [B_1,C_1,A_2,B_2] / (c(B_1a+C_1c), b(A_2b+B_2d)) 
}$$

We restrict the computation to each of the four components of $R|_W$:  $R_1 = V(b,c)$, $R_2 = V(c, A_2b+B_2d)$, $R_3 = V(b, B_1a+C_1c)$, and $R_4 = V(B_1a+C_1c,A_2b+B_2d)$:
{\small
$$\begin{array}{ll}
\begin{array}{l} i=1 \\ (b=c=0) \end{array}	& \qquad
\begin{array}{rcl}
		 \FF_2[B_1,C_1,A_2,B_2] & \stackrel{\sigma, p_2}{\rrarrows} & (\FF_2[a,d]_{ad})_0 \tensor \FF_2[B_1,C_1,A_2,B_2]\\
		 B_1				& \stackrel{\sigma}{\mapsto} & B_1 \\
		 C_1				& \stackrel{\sigma}{\mapsto} & \frac{d}{a}C_1 \\
		 A_2				& \stackrel{\sigma}{\mapsto} & \frac{a}{d}A_2 \\
		 B_2				& \stackrel{\sigma}{\mapsto} & B_2 \\
\end{array} \\
		& \implies \fbox{$S_1 = \FF_2[B_1,B_2,C_1 A_2]$}
		\end{array}
$$
$$
\begin{array}{ll}
\begin{array}{l} i=2 \\ (c=A_2b+B_2d=0) \end{array}		& \qquad
\begin{array}{rcl}
		 \FF_2[B_1,C_1,A_2,B_2] & \stackrel{\sigma, p_2}{\rrarrows} & (\FF_2[a,b,d]_{ad})_0 \tensor \FF_2[B_1,C_1,A_2,B_2]/ (A_2b+B_2d)\\
		 B_1				& \stackrel{\sigma}{\mapsto} & B_1 \\
		 B_2				& \stackrel{\sigma}{\mapsto} & B_2 \\
		 C_1A_2				& \stackrel{\sigma}{\mapsto} & \frac{b}{d}B_1A_2+C_1A_2 = C_1A_2 + B_1B_2 \\
 		 C_1A_2 + B_1 B_2			& \stackrel{\sigma}{\mapsto} &  C_1A_2\\

		C_1A_2(C_1 A_2 + B_1B_2)			& \stackrel{\sigma}{\mapsto} & C_1A_2(C_1 A_2 + B_1B_2)	
\end{array} \\
		& \implies \fbox{$S_1 \cap S_2 = \FF_2[B_1,B_2,C_1A_2(C_1 A_2 + B_1B_2)	]$}\\
\begin{array}{l} i=3 \\ (b=B_1a+C_1c=0) \end{array}	& \qquad
\begin{array}{rcl}
		 \FF_2[B_1,C_1,A_2,B_2] & \stackrel{\sigma, p_2}{\rrarrows} & (\FF_2[a,c,d]_{ad})_0 \tensor \FF_2[B_1,C_1,A_2,B_2]/(B_1a+C_1c)\\
		 B_1				& \stackrel{\sigma}{\mapsto} & B_1 \\
		 B_2				& \stackrel{\sigma}{\mapsto} & B_2 \\
		 C_1A_2				& \stackrel{\sigma}{\mapsto} & \frac{c}{a}B_2C_1+C_1A_2 = C_1A_2 + B_1B_2 \\
		 		 C_1A_2(C_1 A_2 + B_1B_2)			& \stackrel{\sigma}{\mapsto} & C_1A_2(C_1 A_2 + B_1B_2)	
\end{array} \\
		& \implies \fbox{$S_1 \cap S_2 \cap S_3 = \FF_2[B_1^2,B_2,2C_1A_2-B_1B_2]$}\\
\begin{array}{l} i=4 \\ (A_2b+B_2d=0, \\B_1a+C_1c=0) \end{array}	& \qquad
\begin{array}{rcl}
		 \FF_2[B_1,C_1,A_2,B_2] & \stackrel{\sigma, p_2}{\rrarrows} & (\FF_2[a,b,c,d]_{ad-bc})_0 \tensor \FF_2[B_1,C_1,A_2,B_2]/ \\
		 	&& (A_2b+B_2d=0, B_1a+C_1c=0) \\
		 B_1				& \stackrel{\sigma}{\mapsto} & B_1\\
		 C_1				& \stackrel{\sigma}{\mapsto} &  \frac{1}{ad-bc}(B_1bd+C_1d^2)  \\
		 A_2				& \stackrel{\sigma}{\mapsto} & \frac{1}{ad-bc}(A_2a^2+B_2ac) \\
		 B_2				& \stackrel{\sigma}{\mapsto} & B_2 \\
		 C_1A_2			& \stackrel{\sigma}{\mapsto} & \frac{1}{(ad-bc)^2} (B_1A_2a^2bcd+C_1A_2a^2d^2+B_1B_2abcd+C_1B_2acd^2) \\
		 		&	& = C_1A_2\\
\end{array} \\
		& \implies \fbox{$S = \FF_2[B_1,B_2,C_1 A_2]$}\\
\end{array}$$}

It follows that 
$$\bigoplus_k \Gamma(W, \oh(k))^{R|_W} = \FF_2[B_1,B_2,B_1B_2(C_1 A_2 + B_1 B_2)]$$
Since the restriction of $\Lambda$ to $W$ is $\Lambda|_W = C_1A_2(C_1A_2 +  B_1B_2)$, we have established that $S \cong \FF_2[B_1, B_2, \Lambda]$.

Since $B_1$ and $B_2$ are degree 1 and $\Lambda$ is degree four, it follows that $\bar M_0^{\GIT}(\PP^1,2) \cong \PP(1,1,4)$.  The open locus $M_0 (\PP^1, 2)$ is defined by the non-vanishing of $\Lambda$ so that $M_0(\PP^1,2) = \Spec \FF_2[\frac{B_1^4}{\Lambda},\frac{B_1^3 B_2}{\Lambda},\frac{B_1^2 B_2^2}{\Lambda},\frac{B_1^3 B_2}{\Lambda},\frac{B_2^4}{\Lambda}]$ which is the cone over the Veronese embedding $\PP^1 \arr \PP^4$. 
\end{proof}

\begin{remark}
In characteristic 2, $\bar{M}_0^{\GIT}(\PP^1,2)$ is $\QQ$-factorial.  Indeed, there is a resolution of singularities $\widetilde{X} \to \bar{M}_0^{\GIT}(\PP^1,2)$ with a rational exceptional divisor.  From \cite[Theorem 3]{artin_surface_singularities}, it follows that $\bar{M}_0^{\GIT}(\PP^1,2)$ is rational.   Furthermore, \cite[Proposition 17.1]{lipman} states that every two dimensional isolated rational singularity has a finite class group.  In particular, every Weil divisor on $\bar{M}_0^{\GIT}(\PP^1,2)$  has a multiple which is Cartier.  It is unclear whether $\bar{M}_0^{\GIT}(\PP^1,2)$ has finite quotient singularities. 
\end{remark}

\begin{remark}  
The morphism
$$\PP(V \tensor \FF_2) // \PGL_2 \arr (\PP V // \PGL_2) \times \FF_2$$
is a finite universal homeomorphism induced from the inclusion of rings
$\FF_2[\Delta_1, \Delta_2, \Gamma] \hookarr \FF_2[B_1, B_2, \Lambda] $
which is not surjective but the square of every element in $\FF_2[B_1, B_2, \Lambda]$ is in the image.
 \end{remark}

 \subsection{Semistable and stable locus}
 
Recall from \cite[Definition 2]{seshadri_reductivity} that if $G \arr \Spec \ZZ$ is a reductive group scheme acting on projective space $\PP(V)$ where $V$ is a free $\ZZ$-module with a dual $G$-action, a geometric point $x: \Spec k \arr \PP(V)$ is \emph{semistable} if there exists a non-zero homogeneous invariant polynomial $f \in \Sym^* (V^{\dual} \tensor k)$ such that $f(x) \neq 0$.  A geometric point $x: \Spec k \arr \PP(V)$ is \emph{properly stable} if it is semistable and the $G \tensor k$-orbit is closed and the dimension of the stabilizer, $\dim G_x$, is zero.  

It follows from the explicit computation of invariants in Theorem \ref{main_theorem} that:

 \begin{cor} A geometric point $x: \Spec k \arr \PP V$ corresponding to $(s_1,s_2):\PP^1 \arr \PP^1$ is not semistable if and only if $s_1=s_2 = (\alpha x + \beta y)^2$ for some $\alpha,\beta \in k$.  The geometric point $x$ is properly stable if and only if $(s_1,s_2)$ is basepoint free. \epf
 \end{cor}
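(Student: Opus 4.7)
The plan is to read off both characterizations from the explicit generators of $S$ computed in Theorem \ref{main_theorem} together with the Artin stack description $\cM_0(\PP^1,2) \cong [U/\PGL_2]$ established at the start of this section.

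For semistability, a geometric point $x \in \PP V$ fails to be semistable iff every positive-degree homogeneous invariant vanishes at $x$; by the theorem this is the locus $V(\Delta_1, \Delta_2, \Gamma, \Lambda) \subseteq \PP V$. Over an algebraically closed field $k$, the vanishing $\Delta_i = 0$ (equivalently $B_i = 0$ in characteristic $2$) is exactly the condition that $s_i = (\alpha_i x + \beta_i y)^2$ for some $\alpha_i, \beta_i \in k$. Substituting $A_i = \alpha_i^2$, $B_i = 2\alpha_i\beta_i$, $C_i = \beta_i^2$ into the formulas in \eqref{invariants} gives
\[
\Gamma = -2(\alpha_1\beta_2 - \alpha_2\beta_1)^2 \ \ (\mathrm{char}\, k \neq 2), \qquad \Lambda = (\alpha_1\beta_2 + \alpha_2\beta_1)^4 \ \ (\mathrm{char}\, k = 2).
\]
In either case the additional vanishing forces $\alpha_1\beta_2 = \alpha_2\beta_1$, so the linear forms $\ell_i = \alpha_i x + \beta_i y$ are proportional and $(s_1,s_2)$ is a scalar multiple of $(\ell^2,\ell^2)$ for $\ell = \alpha x + \beta y$, matching the claim.

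For proper stability I identify the basepoint-free locus $U = \{\Lambda \neq 0\}$ with the stable locus. The first proposition of this section established that $[U/\PGL_2] \cong \cM_0(\PP^1,2)$ is a separated Artin stack with finite inertia, so every geometric $x \in U$ has a finite $\PGL_2$-stabilizer and a closed $\PGL_2$-orbit in $U$. Because $\Lambda$ is itself a $\PGL_2$-invariant section, $U$ equals the preimage of $M_0(\PP^1,2)$ under the GIT quotient $\pi: \PP V^{\ss} \to \bar M^{\GIT}_0(\PP^1,2)$ and is therefore saturated with respect to $\pi$; hence the orbit of $x$ remains closed in $\PP V^{\ss}$ and $x$ is properly stable. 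Conversely, if $x = (s_1,s_2)$ is semistable with $\Lambda(x) = 0$, I would factor out the common linear factor $(s_1,s_2) = \ell \cdot (m_1,m_2)$ and apply a one-parameter subgroup of $\PGL_2$ fixing the zero of $\ell$ and scaling a transverse coordinate; the resulting limit is a semistable point on a strictly smaller orbit, witnessing the non-closure of the orbit of $x$ and so showing $x$ is not properly stable.

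The main technical difficulty is exhibiting this destabilizing one-parameter subgroup in a way that treats all strictly semistable points with a base point uniformly across the three characteristics, and verifying that the limit indeed lies on a distinct orbit. The semistability characterization reduces to the two displayed polynomial identities, and the forward direction for proper stability is immediate from the quotient stack description.
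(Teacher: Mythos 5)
Your approach is the one the paper intends: the paper offers no written argument beyond ``it follows from the explicit computation of invariants in Theorem \ref{main_theorem}'' (plus a remark that the Hilbert--Mumford criterion gives an alternative), and your semistability computation --- identifying the non-semistable locus with $V(\Delta_1,\Delta_2,\Gamma)$ in characteristic $\neq 2$ and $V(B_1,B_2,\Lambda)$ in characteristic $2$, and checking via the two displayed identities that the extra generator forces the two double roots to coincide --- is exactly that reading, and the identities are correct. (Both you and the paper elide scalars here: what the computation actually shows is that $s_1$ and $s_2$ are each scalar multiples of a common $\ell^2$, one of them possibly $0$; e.g.\ $(4x^2,x^2)$ and $(0,x^2)$ are unstable but do not literally satisfy $s_1=s_2$.) The forward direction of proper stability is also complete: $\Lambda$ is invariant, so $U=\{\Lambda\neq 0\}$ is saturated in $\PP V^{\ss}$, and finite inertia forces every orbit in $U$ to have dimension $3$, hence to be closed in $U$ and therefore in $\PP V^{\ss}$.

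The gap is in the converse, and it is the step you flag yourself. A destabilization argument whose conclusion is ``the limit lies on a strictly smaller orbit, witnessing non-closure'' cannot treat every strictly semistable point, because the strictly semistable locus contains a \emph{closed} orbit, namely that of $(xy,xy)$ (the paper records this immediately after the corollary). For that point any one-parameter subgroup fixing the base points fixes the point itself, so no degeneration to a smaller orbit exists; the failure of proper stability there comes instead from the condition $\dim G_x=0$ in Seshadri's definition, since the stabilizer contains the diagonal torus. The uniform argument you are looking for is short: move a common base point to $[0:1]$, so $C_1=C_2=0$, and apply $\lambda(t)=\mathrm{diag}(t,t^{-1})$, under which $(A_i,B_i,0)\mapsto(t^2A_i,B_i,0)$ with limit $(B_1xy,B_2xy)$ as $t\to 0$; this limit is a genuine point of $\PP V$ because semistability with $C_1=C_2=0$ forces $(B_1,B_2)\neq(0,0)$. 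Either the limit differs from the original point, so the orbit is not closed in $\PP V^{\ss}$, or the original point already equals $(B_1xy,B_2xy)$ and its stabilizer contains the diagonal torus, so $\dim G_x\geq 1$. In both cases the point is not properly stable, in every characteristic.
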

 
 For an algebraically closed field $k$, there is a unique closed point in the strictly semistable locus $ \PP (V \tensor k)^{\ss} \setminus \PP(V \tensor k)^{\s}$ corresponding to $(xy,xy)$.  The stabilizer is $\begin{pmatrix} a & 0 \\ 0 & d \end{pmatrix} \cup \begin{pmatrix} 0 & b \\ c & 0 \end{pmatrix}$.  Any other point in the strictly semistable locus is equivalent to $(xy,x(x+y))$ which has a $\ZZ_2$-stabilizer.
 
 \begin{remark} One can also prove the corollary as an easy application of the Hilbert-Mumford criterion (\cite[Theorem 2.1]{git}).  
 \end{remark}

\bibliography{../../references}{}
\bibliographystyle{amsalpha}
\end{document}